\newtheorem{Theorem}{Theorem}[section]
\newtheorem{Definition}[Theorem]{Definition}
\newtheorem{Proposition}[Theorem]{Proposition}
\newtheorem{Lemma}[Theorem]{Lemma}
\newtheorem{Corollary}[Theorem]{Corollary}
\newtheorem{Remark}[Theorem]{Remark}
\newcommand{\N}{\mathbb N}
\newcommand{\RR}{{{\rm I} \kern -.15em {\rm R} }}
\newcommand{\C}{{{\rm l} \kern -.42em {\rm C} }}
\newcommand{\nat}{{{\rm I} \kern -.15em {\rm N} }}
\newcommand{\be}{\begin{equation}}
\newcommand{\ee}{\end{equation}}
\newcommand{\beq}{\begin{eqnarray}}
\newcommand{\eeq}{\end{eqnarray}}
\newcommand{\beqs}{\begin{eqnarray*}}
\newcommand{\eeqs}{\end{eqnarray*}}
\newcommand{\bt}{\begin{Theorem}}
\newcommand{\et}{\end{Theorem}}
\newcommand{\br}{\begin{Remark}}
\newcommand{\er}{\end{Remark}}
\newcommand{\bc}{\begin{Corollary}}
\newcommand{\ec}{\end{Corollary}}
\newcommand{\bl}{\begin{Lemma}}
\newcommand{\el}{\end{Lemma}}
\newcommand{\bd}{\begin{definition}}
\newcommand{\ed}{\end{definition}}
\newcommand{\bp}{\begin{Proposition}}
\newcommand{\eP}{\end{Proposition}}
\DeclareMathOperator\supp{supp}
\newcommand{\R}{\mathbb R}
\title{Time-delayed opinion dynamics with leader-follower interactions: consensus, stability, and mean-field limits}
\author{Young-Pil Choi \footnote{Department of Mathematics, Yonsei University, 50 Yonsei-Ro, Seodaemun-Gu, Seoul 03722, Republic of Korea (\texttt{ypchoi@yonsei.ac.kr}).} \and 
Chiara Cicolani \footnote{Dipartimento di Ingegneria e Scienze dell'Informazione e Matematica, Universit\`{a} dell'Aquila, Via Vetoio, Loc. Coppito, 67100 L'Aquila, Italy (\texttt{chiara.cicolani@graduate.univaq.it}).} \and  
Cristina Pignotti \footnote{Dipartimento di Ingegneria e Scienze dell'Informazione e Matematica, Universit\`{a} dell'Aquila, Via Vetoio, Loc. Coppito, 67100 L'Aquila, Italy (\texttt{cristina.pignotti@univaq.it}).}}
\begin{document}
\maketitle

\begin{abstract}
We study a time-delayed variant of the Hegselmann--Krause opinion formation model featuring a small group of leaders and a large group of non-leaders. In this model, leaders influence all agents but only interact among themselves. At the same time, non-leaders update their opinions via interactions with their peers and the leaders, with time delays accounting for communication and decision-making lags. We prove the exponential convergence to consensus of the particle system, without imposing smallness assumptions on the delay parameters. Furthermore, we analyze the mean-field limit in two regimes: (i) with a fixed number of leaders and an infinite number of non-leaders, and (ii) with both populations tending to infinity, obtaining existence, uniqueness, and exponential decay estimates for the corresponding macroscopic models.
\end{abstract}

\vspace{5 mm}

\def\qed{\hbox{\hskip 6pt\vrule width6pt
height7pt
depth1pt  \hskip1pt}\bigskip}

\tableofcontents

%
%
%

%
%
%
%
%
%
\section{Introduction}\label{sec1}
In the recent years, the analysis of multi-agents systems became a very attractive topic, because of the large amount of possible applications. They naturally appear in various scientific fields, e.g. in biology \cite{Cama, Carrillo, CS1}, ecology \cite{Sole}, economics \cite{Marsan}, social sciences \cite{Bellomo, BN, BHT, Castellano, Campi, Lac, Dolfin}, physics \cite{DH, Stro}, control theory \cite{Almeida2, D, Pao-Pig, PRT, WCB}, engineering and robotics \cite{Bullo, Desai}. For other applications, see also \cite{Hel, Jack, XWX}.
In the context of multi-agent systems, opinion formation models have been extensively studied to understand how individuals in a society converge to a common viewpoint or polarization occurs. One prominent example is the Hegselmann--Krause (HK) model \cite{HK}, which describes how agents adjust their opinions by averaging their neighbors' opinions within a certain confidence range. The HK model is especially appealing due to its simplicity and its ability to capture a rich variety of collective behaviors observed in social dynamics. Several generalizations of this model have been proposed (see e.g. \cite{Bellomo, BN, CFT, CFT2, Ceragioli, JM}).

One of the most natural extensions of this model concerns the analysis of the time-delayed interactions. Indeed, in many realistic settings, interactions between individuals are not instantaneous. The process of communication, deliberation, and decision-making unavoidably introduces a time delay. Such delays may stem from the finite time needed to receive and process information, or from asynchronous updates among agents. Incorporating these time delays into opinion dynamics models is crucial for better aligning the mathematical description with observed real-world behaviors. In particular, delayed models offer insight into how past interactions shape current decisions and can lead to phenomena such as delayed consensus, sustained oscillations, or transient clustering. Of course, the presence of a delay in the interaction makes the problem more difficult to analyze, since the delay, even if small, can destroy some geometric features of the system. 

Opinion formation models in the presence of time delay effects have already been studied by several authors, and convergence to consensus results have been obtained, first in the case of small time delays  (see e.g.  \cite{CPP, Pao-Pig}) and more recently without any restrictions on the time delay size (see e.g. \cite{3, CPM2A, 4, onoff, Cic-Pig, Cic_Con_Pign}). The second-order version of the model, namely the celebrated Cucker-Smale model, with time delays has been analyzed, e.g., in \cite{CDH22, CH, CL, CP, Cartabia, Cont, DHK20}.

For well-posedness results for alignment models in the presence of time delay effects, we refer to classical texts on functional differential equations \cite{Halanay, Hale}. Here, we
will focus on the asymptotic behavior of the solutions.

In the current work, we investigate a time-delayed opinion formation model in which a population is divided into two groups: a small number of leaders and a larger group of followers (non-leaders). Specifically, we consider $m \in \mathbb{N}$ leaders and $N \in \mathbb{N}$ non-leaders, with
\[
N > m \geq 2.
\]
The defining feature of our setting is the asymmetric interaction structure: while leaders influence all agents in the system, they themselves are influenced only by their fellow leaders. This models situations where opinion leaders, such as political figures, experts, or influencers, shape the views of the broader population but are insulated from the direct influence of non-leaders.

Let
\[
y_i(t) \in \R^d, \quad i=1,\dots, m,
\]
denote the opinion of the $i$-th leader at time $t$, and let
\[
x_i(t) \in \R^d, \quad i=1,\dots, N,
\]
denote the opinion of the $i$-th non-leader at time $t$. To account for the time required for discussion and decision-making, we introduce time delays into the interactions among agents. Consequently, the evolution of opinions is governed by the Hegselmann-Krause opinion formation model with delays:
\begin{equation}\label{eqA}
\begin{array}{l}
\displaystyle{
 \frac{d}{d t}y_i(t)=  \frac{1}{m} \sum_{j=1}^m \psi_{ij}^{\tau_1}(t)(y_j(t-\tau_1)-y_i(t)), \quad t>0, \ i=1,...,m,} \\
\displaystyle{ \frac{d}{d t}x_i(t)=  \frac{1}{N} \sum_{j=1}^N \phi_{ij}^{\tau_2}(t)(x_j(t-\tau_2)-x_i(t))}\\
\displaystyle{\hspace{3 cm} +  \frac{1}{m} \sum_{j=1}^{m} \rho_{ij}^{\tau_1}(t)(y_j(t-\tau_1)-x_i(t)), \quad t>0,\ i=1,...,N,} 
\end{array}
\end{equation}
where the interaction weights are defined by  
\begin{equation} \label{weight}
\begin{split}
& \psi_{ij}^{\tau_1}(t):=\psi(y_i(t),y_j(t-\tau_1)), \\
& \phi_{ij}^{\tau_2}(t):=\phi(x_i(t),x_j(t-\tau_2)), \\
& \rho_{ij}^{\tau_1}(t):=\rho(x_i(t),y_j(t-\tau_1)),
\end{split}
\end{equation}
and 
$
\psi, \phi, \rho: \mathbb{R}^d \times \mathbb{R}^d \rightarrow \mathbb{R}
$
are assumed to be positive, Lipschitz continuous, and uniformly bounded.  We want to emphasize that, in order to prove the convergence to consensus, we do not need the Lipschitz property for the communication rates $\psi, \ \phi, \ \rho.$ We will require the Lipschitz continuity only later on, to deal with the mean-field approximations.  We denote the maximal delay by
\[
\tau:=\max \{\tau_1,\tau_2\},
\]
and prescribe the initial data on the interval $[-\tau,0]$:
\begin{equation} \label{ic1A}
y_i(t)=y_i^0(t), \quad i=1,\dots,m, \ t \in [-\tau,0],
\end{equation}
and 
\begin{equation} \label{ic2A}
x_i(t)=x_i^0(t), \quad i=1,\dots,N, \ t \in [-\tau,0],
\end{equation}
where $y_i^0$ and $x_i^0$ are continuous functions from $[-\tau,0]$ to $\R^d$.

Under the above framework, we aim to show that the system achieves consensus in the long run (cf. \cite{Cic_Oua_Pign} for a related approach with different normalization factors).

To precisely formulate the consensus result, we introduce the following notions.

\begin{Definition} \label{def'}
The global diameter of the system is defined as
\begin{equation} \label{diameter}
\begin{split}
d(t):= \max \Big\{ \max_{i,j=1,\dots,N}&|x_i(t)-x_j(t)|,  \max_{i,j=1,\dots,m}|y_i(t)-y_j(t)|,  \max_{\substack{i=1,\dots,m \\ j=1,\dots,N}} |y_i(t)-x_j(t)| \Big\}.
\end{split}
\end{equation}
Moreover, for each $n \in \mathbb{N}_0$, we define the diameter over the time interval $[n\tau - \tau, n\tau]$ by
\[
\begin{split}
D_n:= \max_{s,t \in [n\tau-\tau,n\tau]} \Big\{ \max_{i,j=1,\dots,N}&|x_i(s)-x_j(t)|,  \max_{i,j=1,\dots,m}|y_i(s)-y_j(t)|,  \max_{\substack{i=1,\dots,m \\ j=1,\dots,N}}|y_i(s)-x_j(t)| \Big\}.
\end{split}
\]
\end{Definition}

We are now in a position to state our first main result regarding the exponential convergence to consensus.
\begin{Theorem}\label{decay_Th}
Let $\{y_i(t)\}_{i=1}^m$ and $\{x_j(t)\}_{j=1}^N$ be the global-in-time classical solution to the system \eqref{eqA} with initial conditions \eqref{ic1A}--\eqref{ic2A}. Then,  there exists a constant $\gamma > 0$, independent of $N$ and $m$, such that the global diameter decays exponentially:
\[
d(t) \le e^{-\gamma (t-2\tau)} D_0.
\]
\end{Theorem}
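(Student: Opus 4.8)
The plan rests on two pillars — convex-hull invariance and a strict contraction of the windowed diameters $D_n$ — together with one elementary but decisive reduction. Since $|a-b|=\max_{|e|=1}\langle a-b,e\rangle$, it suffices to prove the contraction after projecting onto an arbitrary fixed unit vector $e\in\R^d$: writing $u_p(t):=\langle p(t),e\rangle$ for each agent $p\in\{y_1,\dots,y_m,x_1,\dots,x_N\}$, system \eqref{eqA} becomes a family of scalar delayed equations with the same nonnegative bounded weights, and $D_n=\max_{|e|=1}(\mathcal M_n-\mathfrak m_n)$, where $\mathcal M_n,\mathfrak m_n$ are the maximum and minimum of the projections over the window $[(n-1)\tau,n\tau]$. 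A scalar contraction uniform in $e$ then yields the vectorial one upon maximizing over $e$. The main technical device is the Duhamel representation obtained through the integrating factors $e^{\int_0^t\alpha_i}$ for a leader (with $\alpha_i:=\frac1m\sum_j\psi_{ij}^{\tau_1}$) and $e^{\int_0^t\beta_i}$ for a non-leader (with $\beta_i:=\frac1N\sum_j\phi_{ij}^{\tau_2}+\frac1m\sum_j\rho_{ij}^{\tau_1}$): on every $[t_0,t]$ each projected trajectory is an explicit convex combination of its value at $t_0$ and of the delayed values it samples, the weights summing to one because $\int_{t_0}^te^{-\int_s^t\alpha_i}\alpha_i(s)\,ds=1-e^{-\int_{t_0}^t\alpha_i}$.

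First I would establish convex-hull invariance. By this representation — or, equivalently, a sub-tangentiality argument at the first hypothetical exit time, where every sampled delayed value is already interior by minimality — all positions for $t\ge t_0$ remain in the closed convex hull of the positions over $[t_0-\tau,t_0]$. Taking $t_0=n\tau$ gives $M(t)\le\mathcal M_n$ and $\mu(t)\ge\mathfrak m_n$ for $t\ge n\tau$, where $M(t),\mu(t)$ denote the running maximum and minimum of all projections; hence $d(t)\le D_0$ for all $t\ge0$ and the windowed diameters are non-increasing, $D_{n+1}\le D_n$. A by-product is that the whole flow stays in a fixed ball of diameter $\le D_0$, on which the positive continuous weights satisfy $0<L\le\psi,\phi,\rho\le\Lambda$ with $L,\Lambda$ depending only on $D_0$ and the influence functions, hence independent of $N$ and $m$.

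The heart of the proof, and the step I expect to be the main obstacle, is the strict contraction $D_{n+1}\le\kappa D_n$ with $\kappa\in(0,1)$ independent of $N,m,n$. Two features must be reconciled. The delay makes the interaction terms on $[n\tau,(n+1)\tau]$ sample values at times in $[(n-1)\tau,(n+1)\tau]$, partly inside the interval under estimate, so one cannot express $D_{n+1}$ through $D_n$ without the invariance above, which confines all sampled values to $[\mathfrak m_n,\mathcal M_n]$. More seriously, the rate must be $N$-independent, which forbids the naive bound in which one extremal agent pulls the rest, since a single neighbour carries only an $O(1/N)$ weight. The remedy is a two-sided squeeze run simultaneously on $M$ and $\mu$: differentiating at the extremal agents and estimating every weight below by $L$, the diametral drift telescopes to an $N$-independent quantity, schematically
\[
\frac{d}{dt}\big(M-\mu\big)(t)\ \le\ -cL\,\big(M-\mu\big)(t)\ +\ C\Lambda\,\big(\mathcal D(t)-(M-\mu)(t)\big),
\]
where $\mathcal D(t):=\max_{[t-\tau,t]}M-\min_{[t-\tau,t]}\mu$ is the windowed spread and the nonnegative second term is the \emph{delay defect}, precisely the gap between the windowed and the instantaneous spreads. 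Because the leaders form a closed sub-system among themselves while carrying an extra contractive coupling into the followers, the same computation covers all three types of pairs, the leader term only enlarging the constant $c$. Integrating this delayed differential inequality over an interval of length $\tau$ and absorbing the defect through the invariance bounds produces a genuine gain $\kappa=\kappa(L,\Lambda,\tau)<1$; the only delicate bookkeeping is that $D_{n+1}$ compares projections at two \emph{independent} times, so the windowed extrema $\mathcal M_{n+1},\mathfrak m_{n+1}$ must be controlled jointly rather than through separate monotonicity of $M$ and $\mu$, which is exactly why the squeeze is performed on the difference.

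Finally I would iterate and translate back. Maximizing over $e$ promotes the scalar estimate to $D_{n+1}\le\kappa D_n$, so $D_n\le\kappa^{\,n-1}D_1\le\kappa^{\,n-1}D_0$ using $D_1\le D_0$. For $t\in[(n-1)\tau,n\tau]$ the invariance gives $d(t)\le D_n\le\kappa^{\,n-1}D_0$; setting $\gamma:=-\tfrac1\tau\ln\kappa>0$ and using $n-1\ge t/\tau-2$ on this interval, one gets $\kappa^{\,n-1}\le e^{-\gamma(t-2\tau)}$ and hence $d(t)\le e^{-\gamma(t-2\tau)}D_0$. The shift $2\tau$ is exactly the slack between the index $n$ of the window containing $t$ and the number $n-1$ of contraction steps already carried out; and since $L,\Lambda,\kappa,\gamma$ depend only on $D_0,\tau$ and the influence functions, $\gamma$ is independent of $N$ and $m$, as claimed.
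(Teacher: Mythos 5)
Your overall strategy coincides with the paper's in every structural respect: directional projection onto unit vectors, window invariance of the projections (the paper's Lemma \ref{b} and Lemma \ref{3}, giving both $D_{n+1}\le D_n$ and the confinement of all delayed samples to $[\mathfrak m_n,\mathcal M_n]$), uniform positive lower bounds on the weights from the a priori bound on trajectories (Lemma \ref{4} and Remark \ref{2.5}), and, crucially, the $N$- and $m$-independent mechanism you correctly identify: bounding each \emph{full} interaction sum from below by the weight floor so that the delayed sample values telescope between the two extremal agents, leaving a drift $-\Lambda(\mathcal M_n-\mathfrak m_n)$. Your schematic differential inequality is, up to notation, what the paper derives in Lemma \ref{6}, and even the sign trick needed because the leader equation carries total weight $K$ rather than $2K$ (which you gloss as ``the leader term only enlarging the constant'') is Step 2 of Lemma \ref{5}.

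However, the step you yourself flag as the heart, the one-window contraction $D_{n+1}\le\kappa D_n$, does not follow from your inequality, and this is a genuine gap. Integrating $\frac{d}{dt}(M-\mu)\le -L\Delta_n+2K\bigl(\Delta_n-(M-\mu)\bigr)$ over $[n\tau,(n+1)\tau]$, with $\Delta_n:=\mathcal M_n-\mathfrak m_n\le D_n$, yields the gain factor $1-\frac{L}{2K}\bigl(1-e^{-2K(t-n\tau)}\bigr)$, which degenerates to $1$ as $t\downarrow n\tau$: near the left end of the window the spread can still be as large as $D_n$, so after one window the windowed diameter is only \emph{non-increasing}, with strict gain only at the right endpoint. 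Moreover, your proposed remedy --- ``the squeeze is performed on the difference'' $(M-\mu)(t)$ --- does not resolve the two-independent-times issue, because $D_{n+1}=\max_{s,t\in[n\tau,(n+1)\tau]}\bigl(M(s)-\mu(t)\bigr)$ dominates $\max_t (M-\mu)(t)$ and is not controlled by it. The repair is exactly what the paper does: extract the strict gain at a single time, $d(n\tau)\le C D_{n-2}$ (Lemma \ref{6}), and then propagate it into the next window via the \emph{separate one-sided} Gr\"onwall bounds of Lemma \ref{5}, where the rearrangement $e^{-2K(s-n\tau)}\bigl(\langle x_i(n\tau),v\rangle-M_n\bigr)+M_n\le e^{-2K\tau}\langle x_i(n\tau),v\rangle+(1-e^{-2K\tau})M_n$ handles the two independent times; this gives $D_{n+1}\le e^{-2K\tau}d(n\tau)+(1-e^{-2K\tau})D_n\le \tilde C D_{n-2}$, a contraction with a lag of three windows rather than one. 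Your final iteration and the conversion $n\mapsto t$ go through verbatim with this lagged recursion, only with $\gamma=\frac{1}{3\tau}\ln(1/\tilde C)$ in place of $-\frac1\tau\ln\kappa$; so the flaw costs constants, not the theorem, but as written the central claim $D_{n+1}\le\kappa D_n$ is unproved.
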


The proof of Theorem \ref{decay_Th} builds upon the analysis of the global opinion diameter functional $d(t)$, which measures the maximal deviation of opinions across all agents. A central challenge lies in controlling the evolution of $d(t)$ under delayed interactions, especially between leaders and followers. To address this, we derive a Gr\"onwall-type inequality for $d(t)$ by decomposing the interaction terms and estimating their influence across time intervals of length $\tau$. Through a stepwise contraction argument and bootstrapping, we establish that $d(t)$ decays exponentially after a transient layer, independently of the delay size. This approach allows us to obtain consensus convergence without imposing smallness assumptions on the delay parameters.

To understand the collective behavior of a large number of interacting agents, we next study the mean-field limit of the delayed particle system introduced in \eqref{eqA}. The goal of this analysis is twofold: first, to derive macroscopic equations that describe the evolution of the system when the number of agents becomes large; and second, to establish convergence results toward consensus at the mean-field level.

We investigate two distinct asymptotic regimes that reflect different population structures.
\medskip

\noindent{\it Case (i): few leaders and many non-leaders.} In this regime, the number of leaders $m$ remains fixed, while the number of non-leaders $N$ tends to infinity. This setup reflects situations in which a small number of influential individuals shape the dynamics of a much larger population. In the mean-field limit, the leaders retain their finite-dimensional dynamics, whereas the non-leaders are described by a probability density $\nu_t$ governed by a continuity equation. The limiting system reads:
\begin{equation}\label{pde1}
\begin{array}{l}
\displaystyle{
 \frac{d}{d t} \bar y_i(t)=  \frac{1}{m} \sum_{j=1}^m \bar\psi_{ij}^{\tau_1}(t)\bigl( \bar y_j(t-\tau_1)-  \bar y_i(t)\bigr), \quad t>0, \quad i=1,\dots,m,} \\[2mm]
\displaystyle{
 \partial_t \nu_t + \nabla \cdot (\nu_t\, v_t^m)= 0, \quad x \in \mathbb{R}^d, \quad t>0,}
\end{array}
\end{equation}
with the initial data
\[
(\bar y_i(s), \nu_s) =: (\bar y_i^0(s), g_s), \quad i=1,\dots, m, \ x \in \R^d \quad \mbox{for } s \in [-\tau,0].
\]
The interaction weight is given by $ \bar \psi_{ij}^{\tau_1}(t):=\psi(\bar y_i(t), \bar y_j(t-\tau_1))$, and the velocity field for the non-leader density is given by
\begin{equation}\label{flux1}
v_t^m(x):= \int_{\mathbb{R}^d} \phi(x,y)(y-x)\,\nu_{t-\tau_2}(d y) +  \frac{1}{m} \sum_{j=1}^{m}  \rho(x,\bar y_j(t-\tau_1)) \bigl( \bar y_j(t-\tau_1)-x\bigr).
\end{equation}

In this formulation, the leaders remain finite-dimensional agents evolving under delayed mutual interactions, while the non-leader population evolves continuously in time and space under the influence of both the leader group and its own internal dynamics. This hybrid description allows for a tractable yet rich model of hierarchical opinion dynamics.
\medskip

\noindent{\it Case (ii): infinite population limit for both leaders and non-leaders.} In this fully macroscopic regime, both groups are described by probability densities. The mean-field limit then yields a pair of continuity equations for $\bar\mu_t$ (leaders) and $\bar\nu_t$ (non-leaders):
\begin{equation}\label{pde2}
\begin{array}{l}
\displaystyle{
 \partial_t \bar \mu_t + \nabla \cdot (\bar \mu_t\, \bar u_t)= 0, \quad x \in \mathbb{R}^d, \quad t>0,} \\[2mm]
\displaystyle{
 \partial_t \bar \nu_t + \nabla \cdot (\bar \nu_t\, \bar v_t)= 0, \quad x \in \mathbb{R}^d, \quad t>0,}
\end{array}
\end{equation}
subject to the initial data
\begin{equation}\label{init2}
(\bar \mu_s, \bar \nu_s) =: (\bar f_s, \bar g_s), \quad x \in \R^d \quad \mbox{for } s \in [-\tau,0].
\end{equation}
Here, the velocity fields are defined as
\begin{align}\label{flux2}
\bar u_t(x)&:= \int_{\mathbb{R}^d} \psi(x,y)(y-x)\,\bar\mu_{t-\tau_1}(d y),  \\ \label{flux3}
\bar v_t(x)&:= \int_{\mathbb{R}^d} \phi(x,y)(y-x)\,\bar\nu_{t-\tau_2}(d y) +  \int_{\mathbb{R}^d} \rho(x,y)(y-x)\,\bar\mu_{t-\tau_1}(d y). 
\end{align}
This fully macroscopic description is particularly useful for analyzing large-scale patterns and stability properties of the system when the number of interacting agents is extremely high.

To study convergence and decay to consensus in these mean-field models, we introduce diameter-like quantities that measure the spread of the distributions.

In Case (i), we define:
\[
\begin{split}
d^{\nu}(t)&:= \max \Bigl\{ \sup_{x,y \in \operatorname{supp}(\nu_t)}|x-y|, \; \max_{i,j=1,\dots,m}|y_i(t)-y_j(t)|, \;  \max_{i=1,\dots,m}\sup_{x \in \operatorname{supp}(\nu_t)}|y_i(t)- x| \Bigr\}.
\end{split}
\]
In Case (ii), the diameter becomes
\[
\begin{split}
d^{\mu, \nu}(t)&:= \max \Bigl\{ \sup_{x,y \in \operatorname{supp}(\nu_t)}|x-y|, \;  \sup_{x,y \in \operatorname{supp}(\mu_t)}|x-y|,  \; \sup_{\substack{x \in \operatorname{supp}(\nu_t),\\ y \in \operatorname{supp}(\mu_t)}}|x-y| \Bigr\}.
\end{split}
\]
To measure initial discrepancies, we define:
\[
\begin{split}
D^\nu_0 &:= \max_{s,t \in [-\tau,0]}  \max \Bigl\{ \sup_{ \substack{x \in \operatorname{supp}(g_s),\\ y \in \operatorname{supp}(g_t)}}|x-y|, \;  \max_{i,j=1,\dots,m}|y_i(s)-y_j(t)|, \;\max_{i=1,\dots,m}\sup_{x \in \operatorname{supp}(g_t)}|y_i(s)- x| \Bigr\}, \\
D^{\mu, \nu}_0 &:= \max_{s,t \in [-\tau,0]}  \max \Bigl\{ \sup_{\substack{x \in \operatorname{supp}(g_s),\\ y \in \operatorname{supp}(g_t)}}|x-y|, \; \sup_{\substack{x \in \operatorname{supp}(f_s),\\ y \in \operatorname{supp}(f_t)}}|x-y|,  \; \sup_{ \substack{x \in \operatorname{supp}(f_s),\\  y \in \operatorname{supp}(g_t)}}|x-y| \Bigr\}.
\end{split}
\]
 
We now recall the standard notions of push-forward and measure-valued solutions to make the above formulations precise.
\begin{Definition}\label{push}
Let $\mu$ be a Borel measure in $\mathbb{R}^d$ and $\mathcal{T}:\mathbb{R}^d \rightarrow \mathbb{R}^d$ be a measurable map. The push-forward of $\mu$ by $\mathcal{T}$ is the measure $\mathcal{T} \# \mu$ defined by
$$ \mathcal{T} \# \mu (B):= \mu (\mathcal{T}^{-1}(B)),$$
for every Borel sets $B \subset \mathbb{R}^d.$
\end{Definition}

\begin{Definition}\label{solution}
Let $T>0$ and let $\mathcal{P}(\mathbb{R}^d)$ denote a set of probability measures in $\mathbb{R}^d$. We say that $\mu_t \in C([0,T);\mathcal{P}(\mathbb{R}^d) )$ is a measure-valued solution to a continuity equation of the form \eqref{pde1} or \eqref{pde2} if for every $\phi \in C_c^{\infty}(\mathbb{R}^d \times [0,T))$, the following weak formulation holds:
\begin{equation}\label{measure}
\int_{0}^T \int_{\mathbb{R}^d} (\partial_{t}\phi + v(x) \cdot \nabla_x \phi) \mu_t(dx)dt + \int_{\mathbb{R}^d} \phi(x,0) \mu_0(dx) = 0,
\end{equation}
where $v$ is the velocity field defined as \eqref{flux1}, \eqref{flux2}, or \eqref{flux3}.
\end{Definition}

\begin{Theorem}\label{1.7}
Assume that the initial data for the particle system \eqref{eqA} satisfy one of the following:
\begin{enumerate} 
    \item[ ] \textbf{Case (i): few leaders and many non-leaders:} The leader initial data are given by 
    \[
    y_i^0 \in C([-\tau_1, 0]), \quad i=1,\dots,m,
    \]
    and the non-leader initial distribution is 
    \[
    g \in C\bigl([-\tau_2,0]; \mathcal{P}_\infty(\mathbb{R}^d)\bigr).
    \]
    Here $\mathcal{P}_\infty(\R^d)$ denotes the space of all probability measures on $\R^d$ with bounded support.
    \item[ ] \textbf{Case (ii): infinite population limit for both leaders and non-leaders:} The initial densities satisfy
    \[
    f \in C\bigl([-\tau_1,0]; \mathcal{P}_\infty(\mathbb{R}^d)\bigr) \quad \text{and} \quad g \in C\bigl([-\tau_2,0]; \mathcal{P}_\infty(\mathbb{R}^d)\bigr).
    \]
\end{enumerate}
Then, for any finite time $T>0$, the corresponding mean-field model admits a unique solution of equations \eqref{pde1} or \eqref{pde2} on the interval $[0,T)$ with the following regularity properties:
\begin{itemize}
    \item In Case (i), the leader trajectories $\bar y_i(t)$ belong to $C^1([0,T))$, and the non-leader distribution $\nu$ belongs to $C([0,T); \mathcal{P}_{\infty}(\mathbb{R}^d))$, with uniformly compact support. Moreover, the solution satisfies the relation:
    \begin{equation}\label{pushf}
\nu_t = X(t;\cdot) \# \nu_0,
\end{equation}
where $X(t;\cdot)$ is the flow map generated by $v_t^m$.
    
    \item In Case (ii), the measure-valued solutions $\bar \mu(t)$ and $\bar \nu(t)$ belong to $C([0,T);  \mathcal{P}_{\infty}(\mathbb{R}^d))$, with uniformly compact support, and the solutions satisfy
\[
\bar \mu_t = X(t;\cdot) \# \bar \mu_0, \quad \bar \nu_t = Z(t;\cdot) \# \bar \nu_0,
\]
where $X$ and $Z$ are the flow maps associated with $\bar u_t$ and $\bar v_t$, respectively.
\end{itemize}
Moreover, denoting by $d(t)$ the global diameter of the solution (i.e., $d^\nu(t)$ in Case (i) and $d^{\mu,\nu}(t)$ in Case (ii)) and by $D_0$ the corresponding initial discrepancy (i.e., $D^\nu_0$ in Case (i) and $D^{\mu,\nu}_0$ in Case (ii)), there exists a constant $\gamma>0$ such that
\[
d(t) \leq e^{-\gamma(t - 2\tau)}\, D_0, \quad \forall\, t\geq 0.
\]
\end{Theorem}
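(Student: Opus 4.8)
The plan is to treat well-posedness and exponential decay separately, exploiting the key structural feature that the couplings in \eqref{pde1}--\eqref{pde2} are \emph{purely delayed}: the velocity fields \eqref{flux1}, \eqref{flux2}, \eqref{flux3} at time $t$ depend on the unknowns only through their past values at $t-\tau_1$ and $t-\tau_2$. This eliminates the usual fixed-point nonlinearity of mean-field equations and reduces the construction to a sequence of linear transport problems solved by the method of steps.

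For existence and uniqueness I would set $\delta:=\min\{\tau_1,\tau_2\}>0$ and argue inductively on the intervals $[k\delta,(k+1)\delta]$. On $[0,\delta]$ the delayed arguments $t-\tau_1,\,t-\tau_2$ lie in $[-\tau,0]$, where the data are prescribed, so the fields $v_t^m$, $\bar u_t$, $\bar v_t$ are \emph{known}; using the boundedness and Lipschitz continuity of $\psi,\phi,\rho$ together with a bound on the relevant supports, they are continuous in $t$, locally Lipschitz in $x$, and of at most linear growth. The leader equations in \eqref{pde1} then form a non-delayed Cauchy--Lipschitz system with known inhomogeneity, yielding a unique $C^1$ trajectory, while each continuity equation becomes a linear transport equation whose characteristics $\dot X=v_t^m(X)$ (resp.\ $\dot X=\bar u_t(X)$, $\dot Z=\bar v_t(Z)$) generate a well-defined flow, so that $\nu_t=X(t;\cdot)\#\nu_0$ (and analogously in Case (ii)) is the unique measure-valued solution of Definition \ref{solution}. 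Compactness of support propagates because each field averages a point toward the bounded support of the delayed measure; feeding the solution just built as data for the next step and iterating gives existence, uniqueness, the representation \eqref{pushf}, and uniformly compact support on every $[0,T)$.

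For the decay I would transfer the discrete estimate through the mean-field limit rather than repeat the Gr\"onwall analysis, exploiting that the rate $\gamma$ in Theorem \ref{decay_Th} is \emph{independent of $N$ and $m$}. Approximate the initial data by empirical measures: pick configurations whose empirical measures converge narrowly to $g_s$ (and, in Case (ii), to $f_s$) uniformly in $s\in[-\tau,0]$, with the discrete discrepancy $D_{0,N}$ converging to $D^\nu_0$ (resp.\ $D^{\mu,\nu}_0$). A direct computation shows that the empirical measure of any non-leader solution of \eqref{eqA}---together with the leader trajectories in Case (i)---is a weak solution of the corresponding mean-field equation, so by the uniqueness just proved it coincides with the mean-field solution issued from that data. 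By the uniform-in-$N$ compact support these empirical solutions are tight, hence converge along a subsequence narrowly to a limit that, passing to the limit in \eqref{measure} via the Lipschitz continuity of the kernels, is a measure-valued solution; uniqueness identifies it with the sought mean-field solution. Since the diameter functionals $d^\nu,\,d^{\mu,\nu}$ are built from diameters and cross-distances of supports, which are lower semicontinuous under narrow convergence, Theorem \ref{decay_Th} gives
\[
d(t)\le \liminf_{N\to\infty} d_N(t)\le \liminf_{N\to\infty} e^{-\gamma(t-2\tau)}D_{0,N}=e^{-\gamma(t-2\tau)}D_0,
\]
where $d_N$ denotes the particle diameter. (Alternatively, the decay can be obtained directly by rerunning the argument of Theorem \ref{decay_Th} with suprema over supports in place of maxima over indices.)

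The main obstacle is the limit passage. The delay forces all estimates to be organized interval by interval, so one must propagate the narrow convergence (and, if a quantitative rate is wanted, the Wasserstein discrepancy) of the \emph{past} measures into the current step without losing uniformity in $N$ and $m$; this is precisely where the uniform-in-time, uniform-in-$N$ support bound from the well-posedness step is indispensable, since the Lipschitz and growth constants of the velocity fields degrade with the support radius. A secondary difficulty is that Case (i) is genuinely hybrid, requiring the finite-dimensional leader block and the infinite-dimensional non-leader density to be controlled in a single combined metric while the leader-to-follower forcing through $\rho$ is handled consistently in both components. The remaining ingredients---the ODE/transport solvability on each step and the lower semicontinuity of the diameter---are routine.
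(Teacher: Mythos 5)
Your proposal is correct, and it reaches both conclusions of the theorem by a route that differs from the paper's in its two key ingredients. For well-posedness, the paper also proceeds by steps of length $\min\{\tau_1,\tau_2\}$ and controls the support radius $R_X(t)$ by a continuity argument much like yours, but on each step it invokes the general nonlinear well-posedness theory of Ca\~nizo--Carrillo--Rosado \cite{Carrillo} (via the Lipschitz/boundedness estimates of Lemmas \ref{lip} and \ref{lip2}) rather than exploiting, as you do, that the purely delayed coupling makes each step a \emph{linear} transport problem with a known field; your observation is sound (the leader block is indeed a non-delayed Cauchy--Lipschitz system on each step, and the continuity equation is linear in the unknown measure) and yields a more elementary, self-contained construction, at the price of redoing by hand the transport-uniqueness and flow-regularity facts that the citation supplies. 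For the decay, the paper does not use tightness and lower semicontinuity: it first proves quantitative Wasserstein stability estimates (Lemmas \ref{4.3} and \ref{stab}, with constants independent of $p$, applied at $p=\infty$), approximates the data by empirical measures in $d_\infty$, identifies the empirical particle solution with the mean-field solution by uniqueness exactly as you do, and then passes to the limit in the bound of Theorem \ref{decay_Th} using that $d_\infty$-convergence forces Hausdorff convergence of supports, hence $d(t)\to d^\nu(t)$ and $D_{0,N}\to D_0^\nu$; in Case (ii) it runs a two-stage limit, first $N\to\infty$ with $m$ fixed and then $m\to\infty$ through the empirical leader measure. Your compactness route is valid but note the one asymmetry you implicitly paper over: lower semicontinuity under narrow convergence gives $d(t)\le\liminf_N d_N(t)$ for free, but the initial side needs $\limsup_N D_{0,N}\le D_0$, which narrow convergence alone does \emph{not} provide (atoms of vanishing mass may sit far from $\supp g_s$); your stipulation that $D_{0,N}\to D_0$ must therefore be secured by placing the sampled atoms inside, or Hausdorff-close to, the supports --- effectively the $d_\infty$-approximation the paper uses --- and the subsequence extraction additionally requires the time-equicontinuity you flag. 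What each approach buys: yours avoids proving the stability lemmas; the paper's quantitative stability identifies the limit without subsequences, gives uniqueness and continuous dependence as statements of independent interest, and handles Case (ii) cleanly through the intermediate system \eqref{pde1}. Your parenthetical fallback (rerunning the proof of Theorem \ref{decay_Th} with suprema over supports) is also workable.
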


Theorem \ref{1.7} addresses both the well-posedness and large-time behavior of the mean-field systems derived from the interacting particle dynamics. The proof proceeds in two steps. First, we establish the existence and uniqueness of measure-valued solutions by constructing them as push-forwards of initial measures under characteristic flows. The well-posedness is shown in the space of probability measures endowed with the $p$-Wasserstein distance. A key ingredient is a Lipschitz-type stability estimate, derived using optimal transport techniques, which ensures continuous dependence on the initial data. Second, to analyze the large-time behavior, we combine the exponential decay estimate for the particle system (Theorem \ref{decay_Th}) with a quantitative mean-field limit argument. In both regimes, (i) with finitely many leaders and infinitely many followers, and (ii) with both populations tending to infinity, we prove that the macroscopic dynamics inherit the consensus property from their particle counterparts. This two-step approach highlights the robustness of consensus formation under time delays and scaling limits.

The rest of this paper is organized as follows. In Section \ref{sec2}, we provide the particle-level description of the model with time-delayed interactions and establish existence and large-time behavior results under appropriate assumptions.
Section \ref{sec3} is devoted to the mean-field formulation and the rigorous construction of measure-valued solutions to the limiting system, including the proof of Theorem \ref{1.7} on existence and uniqueness. In Section \ref{sec4}, we analyze the stability and asymptotic behavior of the mean-field system. We first derive stability estimates in the Wasserstein distance, and then rigorously establish the consensus behavior by combining stability with particle approximation techniques.
 
%
%
%
%
%
%
\section{Exponential consensus in the time-delayed particle system} \label{sec2}
In this section, we establish exponential convergence to consensus for the delayed leader-follower system defined in \eqref{eqA}. Our analysis proceeds by constructing suitable upper and lower bounds for directional components of the trajectories and showing that the overall diameter contracts over time. We begin with a preliminary lemma that ensures directional components of agent positions remain uniformly bounded over delayed intervals.

For convenience, we set the uniform bound on interaction strengths:
\begin{equation} \label{K}
 K:= \max \{\| \psi \|_{L^\infty}, \| \phi \|_{L^\infty}, \| \rho \|_{L^\infty} \}.
 \end{equation}

\begin{Lemma} \label{b}
Let $\{x_i(t)\}_{i=1}^N$ and $\{y_j(t)\}_{j=1}^m$ be the solution to \eqref{eqA} with the initial conditions given by \eqref{ic1A}--\eqref{ic2A}. For any vector $v\in\mathbb{R}^d$ and time $T\ge 0,$ define the quantities
\[
m_T:= \min \left\{ \min_{j=1,\dots,N} \min_{s \in [T-\tau,T]}  \langle x_j(s), v \rangle, \min_{j=1,\dots,m} \min_{s \in [T-\tau,T]}  \langle y_j(s), v \rangle \right\},
\]
and 
\[
M_T:= \max \left\{ \max_{j=1,\dots,N} \max_{s \in [T-\tau,T]}  \langle x_j(s), v \rangle, \max_{j=1,\dots,m} \max_{s \in [T-\tau,T]}  \langle y_j(s), v \rangle \right\}.
\]
Then, for all $t\ge T-\tau$:
\begin{equation} \label{eq1}
m_T  \leq \langle x_i(t), v \rangle \leq M_T, \quad i=1,\dots,N,
\end{equation}
and 
\begin{equation} \label{eq2}
m_T \leq \langle y_i(t), v \rangle \leq  M_T, \quad i=1, \dots, m.
\end{equation}
\end{Lemma}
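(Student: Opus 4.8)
The plan is to reduce the vector dynamics to scalar inequalities by projecting onto the fixed direction $v$, and then to run a first-exit-time (invariant-region) argument. Setting $a_i(t):=\langle x_i(t),v\rangle$ and $b_j(t):=\langle y_j(t),v\rangle$ and pairing the equations \eqref{eqA} with $v$, each projected derivative becomes a positively weighted average of signed differences,
\[
\frac{d}{dt}a_i(t)=\frac1N\sum_{j=1}^N\phi_{ij}^{\tau_2}(t)\bigl(a_j(t-\tau_2)-a_i(t)\bigr)+\frac1m\sum_{j=1}^m\rho_{ij}^{\tau_1}(t)\bigl(b_j(t-\tau_1)-a_i(t)\bigr),
\]
and likewise $\tfrac{d}{dt}b_i(t)=\tfrac1m\sum_{j}\psi_{ij}^{\tau_1}(t)\bigl(b_j(t-\tau_1)-b_i(t)\bigr)$, where all coefficients $\psi_{ij}^{\tau_1},\phi_{ij}^{\tau_2},\rho_{ij}^{\tau_1}$ are strictly positive. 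Thus every velocity pulls the corresponding projected component toward the (delayed) projected components of the other agents, which is exactly the structure one exploits to trap the extreme values between $m_T$ and $M_T$. Note that the bounds $m_T,M_T$ are taken over both populations simultaneously, so the mixed leader–follower coupling in the $a_i$ equation causes no difficulty: both delayed families $a_j(\cdot-\tau_2)$ and $b_j(\cdot-\tau_1)$ are controlled by the same $M_T$.

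For the upper bound \eqref{eq1}--\eqref{eq2} I would argue by contradiction via a first-exit time. Introduce the continuous function $W(t):=\max\bigl\{\max_i a_i(t),\max_j b_j(t)\bigr\}$, which by definition of $M_T$ satisfies $W(t)\le M_T$ on $[T-\tau,T]$. Fix $\varepsilon>0$ and set $t_\varepsilon:=\inf\{t\ge T:\ W(t)=M_T+\varepsilon\}$; if this set is empty we are done, and otherwise continuity gives $t_\varepsilon>T$, $W(t_\varepsilon)=M_T+\varepsilon$, and $W(t)<M_T+\varepsilon$ for all $t\in[T-\tau,t_\varepsilon)$. Let the maximum at $t_\varepsilon$ be attained by some component, say a non-leader $a_i$ (the leader case is identical and only involves leader–leader coupling). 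Because the delays are positive, the delayed arguments $t_\varepsilon-\tau_1,\ t_\varepsilon-\tau_2$ lie in $[T-\tau,t_\varepsilon)$, so every delayed value $a_j(t_\varepsilon-\tau_2)$ and $b_j(t_\varepsilon-\tau_1)$ is strictly less than $M_T+\varepsilon=a_i(t_\varepsilon)$; positivity of the weights then forces $\tfrac{d}{dt}a_i(t_\varepsilon)<0$. On the other hand, since $a_i(t)\le W(t)<M_T+\varepsilon=a_i(t_\varepsilon)$ for $t<t_\varepsilon$, the left derivative satisfies $\tfrac{d}{dt}a_i(t_\varepsilon)\ge 0$, a contradiction. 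Hence $W(t)<M_T+\varepsilon$ for all $t\ge T-\tau$, and letting $\varepsilon\to0$ yields $W(t)\le M_T$.

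The lower bound follows by applying the same reasoning to the direction $-v$, which exchanges the roles of the maximum and the minimum and replaces $M_T$ by $-m_T$; equivalently one repeats the argument with $\min$ in place of $\max$ and reverses all inequality signs. The only genuine obstacle is the time delay: a priori a delayed value could exceed the current extreme and push a component past $M_T$. This is precisely what the first-exit formulation neutralizes, since at the candidate first crossing $t_\varepsilon$ all delayed evaluations are sampled at strictly earlier times where the bound $M_T+\varepsilon$ has not yet been reached. I would make explicit at the outset that $\tau_1,\tau_2>0$ (so that $t_\varepsilon-\tau_k<t_\varepsilon$ strictly); if a delay vanishes the corresponding term reduces to the instantaneous, drift-free case and the strict sign is inherited from the remaining strictly-delayed terms.
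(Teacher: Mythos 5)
Your proposal is correct, and it follows the same overall barrier strategy as the paper (project onto $v$, inflate the bound to $M_T+\epsilon$, derive a contradiction at a critical time, then apply the result to $-v$ for the lower bound), but the mechanism for the contradiction is genuinely different. The paper defines $S:=\sup\{t>T:\max_j\langle x_j(s),v\rangle<M_T+\epsilon \text{ on } [T,t)\}$, assumes $S<\infty$, and runs a Gr\"onwall estimate on $(T,S)$: since $\langle x_i(T),v\rangle\le M_T$, the exponential interpolation yields $\max_i\langle x_i(t),v\rangle\le M_T+\epsilon-\epsilon e^{-2K(S-T)}$, a quantitative strict gap below the barrier that contradicts $W(s)\to M_T+\epsilon$ as $s\to S^-$. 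You instead argue pointwise at the first crossing time $t_\epsilon$: strict positivity of the kernels plus the fact that all delayed samples lie strictly below $M_T+\epsilon$ forces $\frac{d}{dt}a_i(t_\epsilon)<0$, contradicting the nonnegativity of the derivative at a first touching point. Your version is more elementary (no Gr\"onwall, no quantitative estimate), but it is also slightly less robust: the strict sign of the derivative requires strictly positive delays, and your fallback remark for a vanishing delay would fail in the degenerate case $\tau_1=\tau_2=0$ with several agents touching the barrier simultaneously, since then all differences in the sum can vanish and the derivative at $t_\epsilon$ may be exactly zero. The paper's Gr\"onwall mechanism is insensitive to this, because its strictness is manufactured from the initial $\epsilon$-gap at time $T$ propagated by the exponential factor, not from strict inequalities at the delayed times; only weak bounds $\langle x_j(t-\tau_2),v\rangle\le M_T+\epsilon$ are needed. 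Since the paper's framework presupposes genuine delays (the whole $D_n$ machinery runs on intervals of length $\tau>0$) and you flag the caveat explicitly, this does not constitute a gap in context, but it is the trade-off your shortcut buys.
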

\begin{proof}
We prove \eqref{eq1}; the proof of \eqref{eq2} follows analogously. Fix $v\in\mathbb{R}^d$ and $T\geq 0$. By definition, the inequalities in \eqref{eq1} are trivially satisfied for $t\in[T-\tau, T]$. For $t>T$, let $\epsilon>0$ be arbitrary and define the set
$$ \mathcal{S}_{\epsilon}:= \left\{ t>T : \max_{j=1,\dots,N} \langle x_j(s),v \rangle < M_T +\epsilon, \ s \in [T,t) \right\}. $$
Since the trajectories are continuous, $\mathcal{S}_{\epsilon}$ is nonempty. Denote  
\[
S:= \sup \mathcal{S}_{\epsilon}.
\]
We now claim that $S= + \infty$. Assume, by contradiction, that $S<+\infty$. Then, for all $s\in (T,S)$ we have
\[
\max_{j=1,\dots,N} \langle x_j(s),v \rangle < M_T +\epsilon,
\]
and, by continuity,
\[
\lim_{s \rightarrow S^-} \max_{j=1,\dots,N} \langle x_j(s),v \rangle = M_T +\epsilon.
\]
Now, take any $t\in (T, S)$. Using the second equation in \eqref{eqA} and noting that $t-\tau_1,\,t-\tau_2\in (T-\tau,S)$, we differentiate to obtain
\begin{equation*}
\begin{split}
\frac{d}{d t} \langle x_i(t),v \rangle & =  \frac{1}{N} \sum_{j=1}^N \phi_{ij}^{\tau_2}(t)\langle x_j(t-\tau_2)-x_i(t),v \rangle +  \frac{1}{m} \sum_{j=1}^{m} \rho_{ij}^{\tau_1}(t)\langle y_j(t-\tau_1)-x_i(t),v \rangle \\
& \leq \frac{1}{N} \sum_{j=1}^N \phi_{ij}^{\tau_2}(t)\bigl(M_T+\epsilon-\langle x_i(t),v \rangle\bigr)+\frac{1}{m}\sum_{j=1}^m \rho_{ij}^{\tau_1}(t)\bigl(M_T+\epsilon-\langle x_i(t),v\rangle\bigr) \\
& \leq 2K\bigl(M_T+\epsilon-\langle x_i(t),v\rangle\bigr),
\end{split}
\end{equation*}
where we used the bound $\phi_{ij}^{\tau_2}(t),\,\rho_{ij}^{\tau_1}(t)\le K$. Then,
\[
\frac{d}{d t} \langle x_i(t),v \rangle \leq 2K (M_T+\epsilon)-2K\langle x_i(t),v \rangle,
\]
and applying Gr\"onwall's inequality on $(T,t)$ yields
\[
\langle x_i(t),v \rangle \leq e^{-2K(t-T)} \langle x_i(T),v\rangle+( M_T+\epsilon )\left( 1-e^{-2K(t-T)}\right).
\]
Since $\langle x_i(T),v\rangle \le M_T,$ by the definition of $\mathcal{S}_{\epsilon}$, we find
$$ \max_{i=1,\dots,N} \langle x_i(t),v \rangle \leq M_T+\epsilon-\epsilon e^{-2K(S-T)}, \quad \forall t \in (T,S).$$
Taking the limit as $t \rightarrow S^-,$ we get
$$ \lim_{t \rightarrow S^-} \max_{i=1,\dots,N} \langle x_i(t),v \rangle \leq M_T+\epsilon-\epsilon e^{-2K(S-T)} < M_T+ \epsilon, $$
which contradicts the earlier limit. Therefore, we must have $S=+\infty$, and consequently,
$$  \max_{i=1,\dots,N} \langle x_i(t),v \rangle < M_T+\epsilon, \quad \forall \, t \geq T.$$
Since $\epsilon>0$ is arbitrary, it follows that
$$ \max_{i=1,\dots,N} \langle x_i(t),v \rangle \leq M_T, \quad \forall \, t \geq T.$$ In particular, 
\begin{equation}\label{ineq1}
\langle x_i(t),v \rangle \leq M_T, \quad \forall \, t \geq T, \ i=1,\dots,N.
\end{equation} 
Now, we apply \eqref{ineq1} with a vector $-v \in \mathbb{R}^d$ to get 
\begin{equation*}
\begin{split}
 -\langle x_i(t),v \rangle =  \langle x_i(t),-v \rangle & \leq \max_{j=1,\dots,N} \max_{s \in [T-\tau,T]}  \langle x_j(s),-v \rangle \\
& \leq - \min_{j=1,\dots,N} \min_{s \in [T-\tau,T]}  \langle x_j(s),v \rangle = - m_T. 
\end{split}
\end{equation*}
Then, we have the second inequality 
\[
\langle x_i(t),v \rangle \geq m_T, \quad \forall\, t \geq T, \ i=1,\dots,N.
\]
Thus, \eqref{eq1} is established.
\end{proof}

%
%
%
%
%
%
%
%
\subsection{Uniform boundedness and influence positivity}
Building on Lemma \ref{b}, we now establish uniform bounds on the pairwise distances between agents' opinions. This is key to showing that the diameter of the system does not increase over time and will ultimately decay exponentially.

\begin{Lemma} \label{3}
For each $n \in \mathbb{N}_0,$  the following estimates hold:
\begin{equation}\label{3A}
|x_i(s)-x_j(t)| \leq D_n, \quad \forall \, s,t \geq n\tau-\tau, \ i,j=1,...,N,
\end{equation}
\begin{equation}\label{3B}
|y_i(s)-y_j(t)| \leq D_n, \quad \forall \, s,t \geq n\tau-\tau, \ i,j=1,\dots,m,
\end{equation}
and 
\begin{equation}\label{3C}
|y_i(s)-x_j(t)| \leq D_n, \quad \forall \ s,t \geq n\tau-\tau, \ i=1,\dots,m, \ j=1,\dots,N.
\end{equation}
\end{Lemma}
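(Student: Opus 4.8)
The plan is to reduce all three estimates to Lemma \ref{b} by a single projection argument, exploiting the fact that the directional bounds there hold for \emph{every} vector $v$ and for all times $t \ge T-\tau$. I would fix $n \in \mathbb{N}_0$ and apply Lemma \ref{b} with $T = n\tau$ (note $T \ge 0$ in all cases, and for $n=0$ the bounds involve the prescribed initial data on $[-\tau,0]$). This yields, for every $v \in \mathbb{R}^d$, the quantities $m_{n\tau}$ and $M_{n\tau}$ such that
\[
m_{n\tau} \le \langle x_i(t), v\rangle \le M_{n\tau}, \qquad m_{n\tau} \le \langle y_i(t), v\rangle \le M_{n\tau}
\]
for all $t \ge n\tau - \tau$ and all admissible indices.

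To prove \eqref{3A}, fix $i,j \in \{1,\dots,N\}$ and $s,t \ge n\tau - \tau$. If $x_i(s)=x_j(t)$ the bound is trivial; otherwise choose the unit vector $v = (x_i(s)-x_j(t))/|x_i(s)-x_j(t)|$. Applying the upper bound to $\langle x_i(s),v\rangle$ and the lower bound to $\langle x_j(t),v\rangle$ gives
\[
|x_i(s)-x_j(t)| = \langle x_i(s),v\rangle - \langle x_j(t),v\rangle \le M_{n\tau} - m_{n\tau}.
\]
It then remains to show $M_{n\tau}-m_{n\tau} \le D_n$. Since there are finitely many agents and the trajectories are continuous on the compact interval $[n\tau-\tau,n\tau]$, the extrema defining $M_{n\tau}$ and $m_{n\tau}$ are attained at some agent positions $p^\star$ and $q^\star$ (of leader or follower type) evaluated at times in $[n\tau-\tau,n\tau]$. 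By Cauchy--Schwarz, $M_{n\tau}-m_{n\tau} = \langle p^\star - q^\star, v\rangle \le |p^\star - q^\star|$, and $|p^\star-q^\star|$ is bounded by $D_n$ directly from its definition, whichever of the three cases (follower--follower, leader--leader, or leader--follower) the pair $(p^\star,q^\star)$ falls into. Combining the two displays yields $|x_i(s)-x_j(t)| \le D_n$.

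Estimates \eqref{3B} and \eqref{3C} follow verbatim, choosing instead $v = (y_i(s)-y_j(t))/|y_i(s)-y_j(t)|$ and $v = (y_i(s)-x_j(t))/|y_i(s)-x_j(t)|$, respectively. I do not anticipate a genuine obstacle: the analytic content is entirely carried by Lemma \ref{b}, and the only mild subtlety is that $M_{n\tau}$ and $m_{n\tau}$ may be realized by agents of different types, which is precisely why the definition of $D_n$ bundles all three families of pairwise gaps together, so that the final bound closes uniformly regardless of which agents attain the projected extrema.
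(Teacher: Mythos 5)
Your argument is correct and is essentially the paper's own proof: both fix the difference direction $v$, apply Lemma \ref{b} with $T=n\tau$ to trap $\langle x_i(s),v\rangle$ and $\langle x_j(t),v\rangle$ between $m_{n\tau}$ and $M_{n\tau}$, and then bound $M_{n\tau}-m_{n\tau}$ by $D_n$ via Cauchy--Schwarz, with the mixed-type pairs handled exactly because $D_n$ bundles all three families of gaps. The only cosmetic difference is that you normalize $v$ and make the attainment of the extrema explicit, whereas the paper works with the unnormalized $v=x_i(s)-x_j(t)$ and divides out $|v|$ at the end.
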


\begin{Remark} \label{2.3}
An immediate consequence of Lemma \ref{3} is that the sequence $\{D_n\}_{n\in\mathbb{N}_0}$ is non-increasing:
$$
D_{n+1} \leq D_n,\quad \mbox{\rm  for all}\  n \in \mathbb{N}_0.$$
Then, in particular,
\[
|x_i(s)-x_j(t)| \leq D_0, \quad \forall \ i,j=1,...,N, 
\]
\[
|y_i(s)-y_j(t)| \leq D_0, \quad \forall \ i,j=1,...,m,
\]
and
\[
|y_i(s)-x_j(t)| \leq D_0, \quad \forall \ i=1,...,m, \ j=1,\dots,N,
\]
for all $s,t \geq -\tau.$ 
\end{Remark}

\begin{proof}[Proof of Lemma \ref{3}]
We present the proof of \eqref{3A}; the proofs of \eqref{3B} and \eqref{3C} are analogous. Fix $s,t \ge -\tau$ and suppose that $|x_i(s)-x_j(t)|>0$ (the case when this difference is zero is trivial). Define the vector
\[
v:=x_i(s)-x_j(t).
\]
By applying Lemma \ref{b} and the Cauchy-Schwarz inequality, we obtain
\begin{equation*}
\begin{split}
& \vert x_i(s)-x_j(t)\vert^2 =\langle x_i(s)-x_j(t),v \rangle \\
&\quad \leq \max \Big\{ \max_{i=1,\dots,m} \max_{s \in [n\tau-\tau,n\tau]}  \langle y_i(s),v \rangle,\max_{i=1,\dots,N} \max_{s \in [n\tau-\tau,n\tau]}  \langle x_i(s),v \rangle \Big\} \\
& \hspace{3 cm}  - \min \Big\{ \min_{j=1,\dots,m} \min_{t \in [n\tau-\tau,n\tau]} \langle y_j(t),v \rangle, \min_{j=1,\dots,N} \min_{t \in [n\tau-\tau,n\tau]} \langle x_j(t),v \rangle \Big\} \\
&\quad \leq \max \Big\{ \max_{i,j=1,\dots,m} \max_{s,t \in [n\tau-\tau,n\tau]} \langle y_i(s)-y_j(t),v \rangle, \max_{i,j=1,\dots,N} \max_{s,t \in [n\tau-\tau,n\tau]} \langle x_i(s)-x_j(t),v \rangle, \\
& \hspace{3 cm}\max_{i=1,\dots,m} \max_{j=1,\dots,N} \max_{s,t \in [n\tau-\tau,n\tau]} \langle y_i(s)-x_j(t),v \rangle \Big\} \\
&\quad \leq \max \Big\{ \max_{i,j=1,\dots,m} \max_{s,t \in [n\tau-\tau,n\tau]} |y_i(s)-y_j(t)|\cdot |v|, \max_{i,j=1,\dots,N} \max_{s,t \in [n\tau-\tau,n\tau]} |x_i(s)-x_j(t)| \cdot |v|, \\
& \hspace{3 cm}\max_{i=1,\dots,m} \max_{j=1,\dots,N} \max_{s,t \in [n\tau-\tau,n\tau]} |y_i(s)-x_j(t)| \cdot |v| \rangle \Big\} \cr
&\quad \leq D_n^2.
\end{split}
\end{equation*}
This completes the proof.
\end{proof}

We next show that the agents' trajectories remain uniformly bounded in time.

\begin{Lemma} \label{4}
For all $i=1,...,N$ and $j=1,\dots,m$, the solutions of \eqref{eqA} satisfy
\[
|x_i(t)| \leq C_0 \quad \mbox{and} \quad |y_i(t)| \leq C_0, \quad \forall \, t \geq 0,
\]
where 
\begin{equation*} 
C_0:= \max_{s \in [-\tau,0]}\left\{ \max_{i=1,...,N}|x_i(s)|,  \;\max_{i=1,...,m} |y_i(s)| \right\}.
\end{equation*}
\end{Lemma}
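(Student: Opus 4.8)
The plan is to obtain the uniform-in-time bound as a direct consequence of Lemma \ref{b}, using the elementary identity that the Euclidean norm of a vector equals the supremum of its projections onto unit directions. The essential point is that Lemma \ref{b}, applied at the initial time $T=0$, already provides an invariance region whose extent is dictated entirely by the initial data.

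First I would fix an arbitrary unit vector $v\in\mathbb{R}^d$, $|v|=1$, and invoke Lemma \ref{b} with $T=0$. Then the delayed window $[T-\tau,T]$ collapses to the initial interval $[-\tau,0]$, so the quantities $m_0$ and $M_0$ are computed solely from the prescribed initial functions $x_i^0,y_i^0$. The lemma yields
\[
m_0 \leq \langle x_i(t),v\rangle \leq M_0 \quad\text{and}\quad m_0 \leq \langle y_i(t),v\rangle \leq M_0, \qquad \forall\, t\geq 0,
\]
for all admissible indices $i$.

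It then remains to control $M_0$ and $m_0$ by $C_0$. Since $|v|=1$, Cauchy--Schwarz gives $\langle x_j(s),v\rangle \leq |x_j(s)| \leq C_0$ and $\langle y_j(s),v\rangle \leq |y_j(s)| \leq C_0$ for every index $j$ and every $s\in[-\tau,0]$, so that $M_0\leq C_0$; applying the same estimate with $-v$ gives $m_0\geq -C_0$. Hence $|\langle x_i(t),v\rangle|\leq C_0$ and $|\langle y_i(t),v\rangle|\leq C_0$ for all $t\geq 0$ and every unit vector $v$, and taking the supremum over $|v|=1$ (equivalently, choosing $v=x_i(t)/|x_i(t)|$ whenever $x_i(t)\neq 0$) recovers $|x_i(t)|\leq C_0$ and likewise $|y_i(t)|\leq C_0$. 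Given the strength of Lemma \ref{b}, there is no genuine obstacle: the only substantive step is the passage from the projected bounds to the norm bound via Cauchy--Schwarz, and the only point requiring mild care is noting that the invariance interval of Lemma \ref{b} reduces to the initial-data interval at $T=0$, so that $M_0$ and $m_0$ are uniformly controlled by the single constant $C_0$ independently of the direction $v$.
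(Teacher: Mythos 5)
Your proposal is correct and follows essentially the same route as the paper: both rest on Lemma \ref{b} applied with $T=0$ together with Cauchy--Schwarz to bound the initial projections by $C_0$. The paper simply shortcuts the supremum over unit directions by choosing $v:=x_i(t)$ directly (so that $\langle x_i(t),v\rangle=|x_i(t)|^2\leq C_0|v|$), which is exactly the optimizing direction you identify at the end of your argument.
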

\begin{proof}
We prove the bound for $x_i(t)$; the corresponding estimate for $y_j(t)$ follows similarly. Fix $i\in\{1,\dots,N\}$ and $t\ge 0$. If $|x_i(t)|=0$, the bound is trivial. Otherwise, set
\[
v:=x_i(t).
\]
Then, using Lemma \ref{b}, we obtain
\begin{equation*}
\begin{split}
 \vert x_i(t) \vert^2 &=  \langle x_i(t),v \rangle  \\
& \leq \max \Big\{ \max_{j=1,\dots,m} \max_{s \in [-\tau,0]}  \langle y_j(s),v \rangle, \max_{j=1,\dots,N} \max_{s \in [-\tau,0]}  \langle x_j(s),v \rangle \Big\}\\
& \leq \max \Big\{ \max_{j=1,\dots,m} \max_{s \in [-\tau,0]}  |y_j(s)| \cdot |v|, \max_{j=1,\dots,N} \max_{s \in [-\tau,0]} |x_j(s)| \cdot |v|\Big\} \cr
&\leq C_0^2.
\end{split}
\end{equation*}
This completes the proof.
\end{proof}

From Lemma \ref{4} we immediately obtain a useful lower bound for the influence functions. This bound is crucial, as it ensures that the interactions among the agents remain uniformly positive throughout the evolution of the system.

\begin{Remark} \label{2.5}
Since the influence functions are continuous, we deduce that
\begin{equation*}
\begin{split}
& \psi(y_i(t),y_j(t-\tau_1)) \geq \psi_0:=\min_{|z_1|,|z_2| \leq C_0} \psi(z_1,z_2) >0, \\
& \phi(x_i(t),x_j(t-\tau_2))\geq \phi_0:=\min_{|z_1|,|z_2| \leq C_0} \psi(z_1,z_2) >0, \\
& \rho(x_i(t),y_j(t-\tau_1))\geq \rho_0:=\min_{|z_1|,|z_2| \leq C_0} \rho(z_1,z_2) >0,
\end{split}
\end{equation*}
for each $t \geq 0.$ This positivity is crucial since it prevents the influence terms from degenerating, thereby ensuring effective information exchange across the network.
\end{Remark}

%
%
%
%
%
%
%
%
\subsection{Directional contraction estimates}

With these lower bounds in hand, we now turn our attention to establishing contraction estimates that describe how the differences between agent states decay over time. These estimates will serve as the cornerstone of the consensus result.

\begin{Lemma} \label{5}
For all unit vector $ v \in \R^d$ and for every $n \in \N_0$, the following inequalities hold:
\begin{equation}
\langle x_i(t)-x_j(t),v \rangle \leq e^{-2K(t-t_0)} \langle x_i(t_0)-x_j(t_0),v \rangle + (1-e^{-2K(t-t_0)})D_n, \quad \forall \,  i,j=1,\dots,N,
\label{5A}
\end{equation}
\begin{equation}
\langle y_i(t)-y_j(t),v \rangle \leq e^{-2K(t-t_0)} \langle y_i(t_0)-y_j(t_0),v \rangle + (1-e^{-2K(t-t_0)})D_n, \quad \forall\, i,j=1,\dots,m,
\label{5B}
\end{equation}
and, $\forall\, i=1, \dots,N,\ j=1,\dots,m,$
\begin{equation}
\langle x_i(t)-y_j(t),v \rangle \leq e^{-2K(t-t_0)} \langle x_i(t_0)-y_j(t_0),v \rangle + (1-e^{-2K(t-t_0)})D_n, \quad 
\label{5C}
\end{equation}
for all $t \geq t_0 \geq n\tau.$ 
\end{Lemma}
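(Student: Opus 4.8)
The plan is to prove the three estimates by a single mechanism, so I would present \eqref{5A} in full and note that \eqref{5B} and \eqref{5C} follow by identical reasoning. Fix a unit vector $v\in\R^d$, an integer $n\in\N_0$, indices $i,j$, and a base time $t_0\ge n\tau$, and set $h(t):=\langle x_i(t)-x_j(t),v\rangle$. Everything reduces to the differential inequality
\[
\frac{d}{dt}h(t)\le 2K\bigl(D_n-h(t)\bigr),\qquad t\ge t_0,
\]
after which the conclusion is a one-line Gr\"onwall step: writing $w(t):=D_n-h(t)$, which is nonnegative by Lemma \ref{3}, the inequality becomes $w'(t)\ge -2Kw(t)$, so $e^{2Kt}w(t)$ is nondecreasing; this gives $D_n-h(t)\ge e^{-2K(t-t_0)}\bigl(D_n-h(t_0)\bigr)$, which is exactly \eqref{5A}.

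To produce the differential inequality I would first invoke Lemma \ref{b} with $T=n\tau$. Since $t\ge t_0\ge n\tau$ and $\tau_1,\tau_2\le\tau$, every current and delayed argument appearing below lies in $[n\tau-\tau,\infty)$, so each projection $\langle x_k(\cdot),v\rangle$ and $\langle y_k(\cdot),v\rangle$ is trapped between $m_{n\tau}$ and $M_{n\tau}$. The essential quantitative input is that, because $v$ is a unit vector, Cauchy--Schwarz and the definition of $D_n$ give $M_{n\tau}-m_{n\tau}=\max\langle\cdot,v\rangle-\min\langle\cdot,v\rangle\le D_n$.

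Next I would estimate the two trajectories one-sidedly against these global bounds. Differentiating $\langle x_i(t),v\rangle$ along \eqref{eqA}, replacing each delayed projection $\langle x_k(t-\tau_2),v\rangle$ and $\langle y_k(t-\tau_1),v\rangle$ by the upper bound $M_{n\tau}$, and using $M_{n\tau}-\langle x_i(t),v\rangle\ge 0$ together with $\frac1N\sum_k\phi_{ik}^{\tau_2}(t)\le K$ and $\frac1m\sum_k\rho_{ik}^{\tau_1}(t)\le K$, yields
\[
\frac{d}{dt}\langle x_i(t),v\rangle\le 2K\bigl(M_{n\tau}-\langle x_i(t),v\rangle\bigr).
\]
Symmetrically, bounding the delayed projections from below by $m_{n\tau}$ and using $m_{n\tau}-\langle x_j(t),v\rangle\le 0$ gives $\frac{d}{dt}\langle x_j(t),v\rangle\ge 2K\bigl(m_{n\tau}-\langle x_j(t),v\rangle\bigr)$. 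Subtracting these and invoking $M_{n\tau}-m_{n\tau}\le D_n$ produces $h'(t)\le 2K(D_n-h(t))$. For \eqref{5B} and \eqref{5C} the leader equation contributes a single interaction term, so the relevant coefficient is bounded by $K\le 2K$; weakening $K$ to $2K$ is legitimate because the multiplying factor $M_{n\tau}-\langle\cdot,v\rangle$ (resp.\ $\langle\cdot,v\rangle-m_{n\tau}$) has a definite sign, and this renders all three cases uniform.

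The main obstacle is obtaining the constant $2K$ rather than $4K$. Differentiating the difference $h(t)$ directly and symmetrizing would force the two interaction coefficients $a_i(t),a_j(t)$ of the non-leaders to add, each being only $\le 2K$, and would therefore produce a factor $4K$. The resolution is to avoid symmetrization altogether: one compares each projection separately to the \emph{fixed} global bounds $M_{n\tau}$ and $m_{n\tau}$ (upper bound for $x_i$, lower bound for $x_j$), so that only a single agent's weight enters each scalar inequality, while the cross contribution is absorbed through $M_{n\tau}-m_{n\tau}\le D_n$. The only remaining points requiring care are checking the sign conditions that license replacing the exact coefficients $a_i(t),a_j(t)$ by $2K$, and the time-window bookkeeping ensuring that Lemma \ref{b} genuinely controls all delayed arguments for $t\ge t_0\ge n\tau$.
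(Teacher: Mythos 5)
Your proof is correct and follows essentially the same route as the paper's: both apply Lemma \ref{b} with $T=n\tau$ to trap all delayed projections, compare each agent one-sidedly against the fixed bounds $M_{n\tau}$ and $m_{n\tau}$ (using the sign conditions to replace the exact coefficients by $2K$, including the $K\le 2K$ weakening for the leader equation), and absorb the cross term via $M_{n\tau}-m_{n\tau}\le D_n$. The only cosmetic difference is that you subtract the two differential inequalities and apply Gr\"onwall once to $h(t)$, whereas the paper applies Gr\"onwall to each projection separately and subtracts the integrated estimates; the two orderings are equivalent.
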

\begin{proof} We divide the proof into two steps. In the first step, we obtain contraction estimates for the differences among agents within the same group (both non-leaders and leaders), and in the second step, we treat the mixed case involving a non-leader and a leader. \\

\emph{\textbf{Step 1.}} We first derive the contraction estimate for the non-leader agents. Fix a unit vector $v \in \R^d$ and a given $n \in \N_0$. To quantify the maximal and minimal projection values along $v$ over the time interval $[n\tau-\tau,\, n\tau]$, define
\[
M_n:= \max_{s \in [n\tau-\tau,n\tau]} \left\{ \max_{j=1,\dots,N}  \langle x_j(s), v \rangle, \;\max_{j=1,\dots,m} \langle y_j(s), v \rangle \right\},
\]
and 
\[
m_n:= \min_{s \in [n\tau-\tau,n\tau]} \left\{ \min_{j=1,\dots,N}  \langle x_j(s), v \rangle, \;\min_{j=1,\dots,m} \langle y_j(s), v \rangle \right\}.
\]
It is clear that $M_n-m_n \leq D_n$. Now, fix an index $i \in \{1,\dots,N\}$ and consider $t \geq t_0 \geq n\tau$. \\ 

By the definition of system \eqref{eqA} and applying Lemma \ref{b}, we have
\begin{equation*}
\begin{split}
\frac{d}{d t} \langle x_i(t),v \rangle & = \frac{1}{N} \sum_{j=1}^N \phi_{ij}^{\tau_2}(t) \langle x_j(t-\tau_2)-x_i(t),v \rangle + \frac{1}{m} \sum_{j=1}^{m} \rho_{ij}^{\tau_1}(t) \langle y_j(t-\tau_1)-x_i(t),v \rangle \\
& \leq \frac{1}{N}\sum_{j=1}^N \phi_{ij}^{\tau_2}(t) \bigl(M_n - \langle x_i(t),v \rangle\bigr) + \frac{1}{m} \sum_{j=1}^{m} \rho_{ij}^{\tau_1}(t) \bigl(M_{n}- \langle x_i(t),v \rangle \bigr) \\
& \leq 2K\bigl(M_n - \langle x_i(t),v \rangle\bigr), \\
\end{split}
\end{equation*}
where we used that $\phi_{ij}^{\tau_2}(t)$ and $\rho_{ij}^{\tau_1}(t)$ are bounded by $K$, and that $t-\tau_1,\,t-\tau_2 \geq n\tau-\tau$. By applying the Gr\"onwall's lemma, we find 
\begin{equation}\label{eq21}
\langle x_i(t),v \rangle \leq e^{-2K(t-t_0)}  \langle x_i(t_0),v \rangle + (1-e^{-2K(t-t_0)})M_n .
\end{equation}
Similarly, for any $j \in \{1,\dots,N\}$ and $t \geq t_0 \geq n\tau$, we derive the lower bound
\begin{equation}\label{eq22}
\langle x_j(t),v \rangle \geq e^{-2K(t-t_0)}  \langle x_j(t_0),v \rangle + (1-e^{-2K(t-t_0)})m_n .
\end{equation}
Subtracting \eqref{eq22} from \eqref{eq21} yields
\begin{equation*}
\begin{split}
\langle x_i(t)-x_j(t),v \rangle & \leq e^{-2K(t-t_0)}  \langle x_i(t_0)-x_j(t_0),v \rangle + (1-e^{-2K(t-t_0)})(M_n-m_n) \\
& \leq e^{-2K(t-t_0)}\langle x_i(t_0)-x_j(t_0),v \rangle + (1-e^{-2K(t-t_0)})D_n.
\end{split}
\end{equation*}
Thus, we have obtained the contraction estimate for the non-leader agents as stated in \eqref{5A}. By an analogous argument applied to the leader dynamics (using the first equation of \eqref{eqA} and the corresponding influence function $\psi$), one obtains a similar contraction estimate \eqref{5B} for the leader agents. \\

\emph{\textbf{Step 2.}} We now consider the mixed case involving a non-leader and a leader. For a leader $y_j(t)$, we use the first equation in system \eqref{eqA} and apply Lemma \ref{b} to obtain
\begin{equation*}
\begin{split}
\frac{d}{d t} \langle y_j(t),v \rangle & = \frac{1}{m} \sum_{l=1}^m \psi_{jl}^{\tau_1}(t) \langle y_l(t-\tau_1)-y_j(t),v \rangle \\
& \geq \frac{1}{m}\sum_{l=1}^m \psi_{jl}^{\tau_1}(t) (m_n - \langle y_j(t),v \rangle) \\
& \geq K(m_n - \langle y_j(t),v \rangle) \\
& \geq 2K(m_n - \langle y_j(t),v \rangle),
\end{split}
\end{equation*}
where we used $m_n - \langle y_j(t),v \rangle\le 0$ for all $j=1,\dots,m$ and for all $t\ge n\tau.$ Applying Gr\"onwall's inequality then gives
\begin{equation}\label{eq22A}
\langle y_j(t),v \rangle \geq e^{-2K(t-t_0)}  \langle y_j(t_0),v \rangle + (1-e^{-2K(t-t_0)})m_n.
\end{equation}
Subtracting \eqref{eq22A} from the bound for $\langle x_i(t), v \rangle$ in \eqref{eq21} yields
\begin{equation*}
\begin{split}
\langle x_i(t)-y_j(t),v \rangle & \leq e^{-2K(t-t_0)}  \langle x_i(t_0)-y_j(t_0),v \rangle + (1-e^{-2K(t-t_0)})(M_n-m_n) \\
& \leq e^{-2K(t-t_0)}\langle x_i(t_0)-y_j(t_0),v \rangle + (1-e^{-2K(t-t_0)})D_n.
\end{split}
\end{equation*}
This completes the proof of inequality \eqref{5C}.
\end{proof}

\begin{Remark}
It is worth noting that for pairs of leaders, one may also derive a sharper contraction estimate that depends only on the leader group:
\begin{equation*}
\langle y_i(t)-y_j(t),v \rangle \leq e^{-K(t-t_0)} \langle y_i(t_0)-y_j(t_0),v \rangle + (1-e^{-K(t-t_0)})\max_{h,k=1,\dots, m}\max_{r,s\in [n\tau-\tau, n\tau]} \vert y_h(r)-y_k(s)\vert,
\label{5D}
\end{equation*}
 for all $i,j=1,\dots,m$. However, for the overall consensus result, it is essential to work with unified estimates (as in Lemma \ref{5}) that simultaneously control all interactions in the mixed leader-follower system.
\end{Remark}

%
%
%
%
%
%
%
%
\subsection{Recursive control of diameter}
To estimate the evolution of the system diameter in discrete time, we now investigate how the maximum distance between any two agents at time $n\tau$ relates to earlier diameters. The following lemma provides a key step toward establishing exponential contraction of the global diameter.

\begin{Lemma} \label{6}
There exists a constant $C \in (0,1)$ such that 
\[
d(n\tau) \leq C D_{n-2},
\]
for all $n \geq 2.$
\end{Lemma}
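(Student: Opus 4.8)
The plan is to fix a unit vector $v\in\R^d$ together with an extremal pair realizing $d(n\tau)$, and to show that the $v$-projection of this pair has contracted by a definite factor relative to $D_{n-2}$. Since the maximizing pair may consist of two non-leaders, two leaders, or one of each, I would run the three cases in parallel using the contraction estimates \eqref{5A}, \eqref{5B}, \eqref{5C} of Lemma \ref{5}, and finally take $C$ to be the worst of the three resulting constants; the mixed leader-follower case is the one that forces the use of the unified estimate. Throughout, I would freeze the band provided by Lemma \ref{b} at level $n-2$: writing $M:=M_{(n-2)\tau}$, $m:=m_{(n-2)\tau}$ and $\Delta:=M-m\le D_{n-2}$, every projection $\langle x_i(s),v\rangle$ and $\langle y_j(s),v\rangle$, including the delayed arguments since $\tau_1,\tau_2\le\tau$, lies in $[m,M]$ for all $s\ge(n-3)\tau$. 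This uniform band is what repairs the loss of pointwise monotonicity of the extremes caused by the delay, and it is exactly why the statement is phrased through $D_{n-2}$: two delay windows are needed so that the drift analysis over the final window and all the delayed configurations entering \eqref{eqA} remain trapped in the same band $[m,M]$.

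The core is to extract a contraction factor $\alpha\in(0,1)$, independent of $N$ and $m$, such that the directional spread along $v$ is squeezed to at most $\alpha\Delta$ by time $n\tau$. For any agent, the right-hand side of \eqref{eqA} drives $\langle x_i(t),v\rangle$ toward the weighted average $A_i(t)$ of its delayed neighbours' projections, a convex combination of values in $[m,M]$ whose total weight is bounded below by $\phi_0+\rho_0$ (respectively $\psi_0$ for leaders) thanks to Remark \ref{2.5}. The decisive step is a dichotomy on the population's projection-average over the earlier window: it either lies in the lower half $[m,\tfrac{M+m}{2}]$ or in the upper half of $[m,M]$. In the first case every agent currently near the top is pulled inward, since the maximal agent $x_{i^*}$ obeys
\[
\frac{d}{dt}\langle x_{i^*}(t),v\rangle\le(\phi_0+\rho_0)\bigl(A_{i^*}(t)-\langle x_{i^*}(t),v\rangle\bigr),\qquad A_{i^*}(t)\le M-c\,\Delta,
\]
so that, by Gr\"onwall over the windows $[(n-2)\tau,n\tau]$ as in the proof of Lemma \ref{5}, $\langle x_i(n\tau),v\rangle\le M-c'\Delta$ for all $i$; in the second case the symmetric computation pushes the minimal projection up to $m+c'\Delta$. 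The constants $c,c'>0$ are $N,m$-independent precisely because the sums in \eqref{eqA} run over the entire (complete) interaction graph, so the inward drift sees the whole averaged mass rather than a single neighbour.

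Combining the two sides, in either branch of the dichotomy the directional spread at time $n\tau$ satisfies $\langle p_i(n\tau)-q_j(n\tau),v\rangle\le(1-c')\Delta$ for the extremal pair $(p_i,q_j)$, whence
\[
d(n\tau)=\langle p_i(n\tau)-q_j(n\tau),v\rangle\le(1-c')\Delta\le(1-c')D_{n-2}=:C\,D_{n-2},
\]
with $C:=1-c'\in(0,1)$ after taking the worst constant over the three cases; the last inequality uses $\Delta\le D_{n-2}$ from Lemma \ref{3}. If one prefers to split the two windows, the same conclusion follows by first squeezing the spread over $[(n-2)\tau,(n-1)\tau]$ and then propagating it to $n\tau$ through one more application of \eqref{5A}--\eqref{5C} at level $n-1$, which only improves the constant.

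I expect the main obstacle to be precisely the extraction of the $N,m$-independent factor. The naive route — bounding every delayed neighbour by $M$ in the Gr\"onwall step, as in Lemma \ref{5} — returns only $d(n\tau)\le D_{n-2}$ with no gain, while singling out one low neighbour of the maximal agent produces a contraction of order $1/N$, which would destroy the uniformity in $N$ and $m$ asserted in Theorem \ref{decay_Th}. The averaged two-sided dichotomy avoids this, but it must be carried over two full delay windows so that the inward drift detected at the delayed times is genuinely realized in the configuration at $n\tau$, uniformly in the unrestricted delays $\tau_1,\tau_2$; and one must ensure that the side chosen in the dichotomy can be maintained over the relevant window and that the one-sided averages $A_i(t)$ of all agents — leaders averaging only over leaders, non-leaders over the whole population — are simultaneously controlled on the same side of the band, which is where the asymmetric leader-to-follower coupling makes the bookkeeping delicate.
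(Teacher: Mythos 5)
Your argument contains a genuine gap, and it sits exactly at the step you yourself flag as ``delicate bookkeeping'' without resolving it. Your contraction factor is extracted from a dichotomy on \emph{the population's} projection-average, with the claim that in the low-average branch every agent near the top of the band is pulled inward. For leaders this is false: by the first equation of \eqref{eqA} a leader's drift is governed solely by the leader-only average, which can lie in the upper half of $[m,M]$ while the full-population average lies in the lower half (take all $m$ leaders near $M$ and the $N\gg m$ followers near $m$). In that configuration the leaders at the top are not pulled inward in your first branch, the asserted bound $\langle x_i(n\tau),v\rangle\le M-c'\Delta$ ``for all $i$'' fails for the leaders, and the conclusion $\langle p_i(n\tau)-q_j(n\tau),v\rangle\le(1-c')\Delta$ does not follow. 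Since the mixed leader--follower pair is precisely the case that determines the constant in Lemma \ref{6}, this is not a technicality but the heart of the lemma. The repair within your scheme would be to run the dichotomy on the \emph{leader} average instead: leaders see exactly that average, and thanks to the $\frac1m\sum_{j}\rho_{ij}^{\tau_1}$ normalization every non-leader assigns total weight at least $\rho_0$ to the leader group, so both branches then control all agents simultaneously. A secondary flaw: your differential inequality $\frac{d}{dt}\langle x_{i^*}(t),v\rangle\le(\phi_0+\rho_0)\bigl(A_{i^*}(t)-\langle x_{i^*}(t),v\rangle\bigr)$ uses the lower bound on the weights, which is only legitimate when the bracket is nonpositive; the current maximizer need not dominate the weighted average of its \emph{delayed} neighbours, so this needs the standard reformulation (drift at most total weight times $(M-c\Delta-\langle x_{i^*},v\rangle)$) before Gr\"onwall applies.

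For contrast, the paper's proof needs no dichotomy at all and thereby sidesteps the asymmetry you struggled with. For the extremal pair it adds and subtracts $M_{n-1}$ and $m_{n-1}$ in the derivative of the projection difference (see \eqref{68}): each delayed common neighbour $z_l$ contributes a nonpositive term $\langle z_l,v\rangle-M_{n-1}\le 0$ in the equation of one agent and a nonpositive term $m_{n-1}-\langle z_l,v\rangle\le 0$ in the equation of the other; bounding the weights from below by $\Lambda=\min\{\psi_0,\phi_0,\rho_0\}$ (Remark \ref{2.5}) and summing makes the unknown projections telescope, leaving the definite gain $-\Lambda(M_{n-1}-m_{n-1})$ independently of where the neighbours sit in the band. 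In the mixed case this gain is supplied by the leaders alone, as the common influencers of both agents --- which is exactly where the normalization of the weights enters --- and a single Gr\"onwall step on $[(n-1)\tau,n\tau]$, combined with $D_{n-1}\le D_{n-2}$, yields $C=1-\frac{\Lambda}{2K}\bigl(1-e^{-K\tau}\bigr)$. This two-sided signed regrouping is precisely what answers your stated worry: it neither throws away the gain (as crude bounding by $M$ would) nor degenerates like $1/N$ (as singling out one low neighbour would), because the full averaged sums cancel against the partner's equation.
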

\begin{proof} We prove the lemma by considering three distinct cases, each corresponding to a different configuration in which the diameter $d(n\tau)$ is achieved.

\medskip

\textbf{\emph{Case 1.}} Assume that 
\[
d(n\tau)=|x_i(n\tau)-x_j(n\tau)| 
\]
for some $i,j=1,\dots,N.$ Since the case $|x_i(n\tau)-x_j(n\tau)|=0$ is trivial, we suppose $|x_i(n\tau)-x_j(n\tau)|>0.$ In this case, we first normalize the difference by setting
\[
v:=\frac{x_i(n\tau)-x_j(n\tau)}{|x_i(n\tau)-x_j(n\tau)|}.
\]

To capture the spread of agent opinions over a preceding time interval, we now introduce the quantities
\[
  M_{n-1}:=  \max_{s \in [(n-2)\tau,(n-1)\tau]} \left\{ \max_{j=1,\dots,N}  \langle x_j(s), v \rangle, \;\max_{j=1,\dots,m} \langle y_j(s), v \rangle \right\},
\]
            and 
\[
  m_{n-1}:=  \min_{s \in [(n-2)\tau,(n-1)\tau]} \left\{ \min_{j=1,\dots,N}  \langle x_j(s), v \rangle, \;\min_{j=1,\dots,m} \langle y_j(s), v \rangle \right\}.
\]
It is clear that 
\[
M_{n-1}-m_{n-1} \leq D_{n-1}.
\]

Next, we analyze the evolution of the projection differences along $v$ during the time interval $t \in [(n-1)\tau,n\tau]$. Using the system dynamics, we write the time derivative of the projection difference between agents $x_i$ and $x_j$ as follows:
\begin{align*}
& \frac{d}{d t} \langle x_i(t)-x_j(t), v \rangle  \cr
&\quad = \frac{1}{N} \sum_{l=1}^N \phi_{il}^{\tau_2}(t) \langle x_l(t-\tau_1)-x_i(t), v \rangle + \frac{1}{m}\sum_{l=1}^{m}\rho_{il}^{\tau_1}(t) \langle y_l(t-\tau_2)-x_i(t),v \rangle\\
&\qquad - \frac{1}{N}\sum_{l=1}^N \phi_{jl}^{\tau_2}(t) \langle x_l(t-\tau_1)-x_j(t), v \rangle - \frac{1}{m}\sum_{l=1}^{m} \rho_{jl}^{\tau_1}(t) \langle y_l(t-\tau_2)-x_j(t), v \rangle.
\end{align*}
We now regroup the terms by introducing the shift $M_{n-1}$ (which represents an upper bound on the projections during $[(n-2)\tau,(n-1)\tau]$). In particular, we rewrite the above derivative as
\begin{equation}
\begin{split}
& \frac{d}{d t} \langle x_i(t)-x_j(t), v \rangle  \cr
&\quad = \frac{1}{N}\sum_{l=1}^N \phi_{il}^{\tau_2}(t) (\langle x_l(t-\tau_1), v \rangle - M_{n-1}) + \frac{1}{N}\sum_{l=1}^N \phi_{il}^{\tau_2}(t) (M_{n-1}-\langle x_i(t), v \rangle) \\
&\qquad + \frac{1}{m} \sum_{l=1}^{m} \rho_{il}^{\tau_1}(t) (\langle y_l(t-\tau_2), v \rangle - M_{n-1}) + \frac{1}{m} \sum_{l=1}^{m} \rho_{il}^{\tau_1}(t) (M_{n-1}- \langle x_i(t), v \rangle) \\
&\qquad + \frac{1}{N}\sum_{l=1}^N \phi_{jl}^{\tau_2}(t) (m_{n-1}-\langle x_l(t-\tau_1), v \rangle) + \frac{1}{N}\sum_{l=1}^N \phi_{jl}^{\tau_2}(t) (\langle x_j(t), v \rangle-m_{n-1}) \\
&\qquad  + \frac{1}{m}\sum_{l=1}^m \rho_{jl}^{\tau_1}(t) (m_{n-1}-\langle y_l(t-\tau_2), v \rangle) + \frac{1}{m}\sum_{l=1}^m \rho_{jl}^{\tau_1}(t) (\langle x_j(t), v \rangle-m_{n-1}) .
\end{split}
\label{68} 
\end{equation}
At this point, it is convenient to introduce two auxiliary sums, $S_1$ and $S_2$, corresponding respectively to the contributions involving the upper bound $M_{n-1}$ and the lower bound $m_{n-1}$. Using Remark \ref{2.5} and the fact that the weights are bounded by $K$, we obtain
\begin{equation*}
\begin{split}
S_1 & := \frac{1}{N}\sum_{l=1}^N \phi_{il}^{\tau_2}(t) (\langle x_l(t-\tau_1), v \rangle - M_{n-1}) + \frac{1}{N}\sum_{l=1}^N \phi_{il}^{\tau_2}(t) (M_{n-1}-\langle x_i(t), v \rangle) \\
&\quad + \frac{1}{m} \sum_{l=1}^{m} \rho_{il}^{\tau_1}(t) (\langle y_l(t-\tau_2), v \rangle - M_{n-1}) + \frac{1}{m} \sum_{l=1}^{m} \rho_{il}^{\tau_1}(t) (M_{n-1}- \langle x_i(t), v \rangle) \\
& \leq \frac{\phi_0}{N} \sum_{l=1}^N (\langle x_l(t-\tau_1), v \rangle - M_{n-1})  + \frac{\rho_0}{m} \sum_{l=1}^{m}(\langle y_l(t-\tau_2), v \rangle - M_{n-1}) \\
&\quad + 2K (M_{n-1}-\langle x_i(t), v \rangle).
\end{split}
\end{equation*} 

Similarly, we define
\begin{equation*}
\begin{split}
S_2 & := \frac{1}{N}\sum_{l=1}^N \phi_{jl}^{\tau_2}(t) (m_{n-1}-\langle x_l(t-\tau_1), v \rangle) + \frac{1}{N}\sum_{l=1}^N \phi_{jl}^{\tau_2}(t) (\langle x_j(t), v \rangle-m_{n-1}) \\
& + \frac{1}{m}\sum_{l=1}^m \rho_{jl}^{\tau_1}(t) (m_{n-1}-\langle y_l(t-\tau_2), v \rangle) + \frac{1}{m}\sum_{l=1}^m \rho_{jl}^{\tau_1}(t) (\langle x_j(t), v \rangle-m_{n-1}) \\
& \leq \frac{\phi_0}{N} \sum_{l=1}^N (m_{n-1}-\langle x_l(t-\tau_1), v \rangle)+ \frac{\rho_0}{m}\sum_{l=1}^m  (m_{n-1}-\langle y_l(t-\tau_2), v \rangle) \\
& + 2K(\langle x_j(t), v \rangle - m_{n-1}),
\end{split}
\end{equation*}
where we used the fact that, being $t \in [(n-1)\tau,n\tau],$ it holds that  $t -\tau_1, t-\tau_2 \in [(n-2)\tau,n\tau].$
Combining the estimates from $S_1$ and $S_2$ in \eqref{68}, we deduce that
\begin{equation*}
\begin{split}
  \frac{d}{d t} \langle x_i(t)-x_j(t), v \rangle 
& \leq 2K (M_{n-1}-m_{n-1}) - 2K \langle x_i(t)-x_j(t), v \rangle \\
&\quad + \frac{\phi_0}{N}  \sum_{l=1}^N(\langle x_l(t-\tau_1), v \rangle - M_{n-1})  + \frac{\rho_0}{m} \sum_{l=1}^{m}(\langle y_l(t-\tau_2), v \rangle - M_{n-1}) \\
&\quad + \frac{\phi_0}{N} \sum_{l=1}^N (m_{n-1}-\langle x_l(t-\tau_1), v \rangle)+ \frac{\rho_0}{m}\sum_{l=1}^m  (m_{n-1}-\langle y_l(t-\tau_2), v \rangle) \\
&\leq 2K (M_{n-1}-m_{n-1}) - 2K \langle x_i(t)-x_j(t), v \rangle + \left(\phi_0+\rho_0\right)(-M_{n-1}+m_{n-1}).
\end{split}
\end{equation*}
For notational simplicity, we set
\[
\Lambda:=\min\{\psi_0, \phi_0,\rho_0\}.
\]
Then, the inequality simplifies to
\[
\frac{d}{d t} \langle x_i(t)-x_j(t), v \rangle \leq 2 \left( K-\Lambda\right) (M_{n-1}-m_{n-1}) - 2K \langle x_i(t)-x_j(t), v \rangle.
\]
Applying the Gr\"onwall's lemma on the time interval $[(n-1)\tau,t]$ with $t \in [(n-1)\tau,n\tau],$ we find that
\begin{equation*}
\begin{split}
\langle x_i(t)-x_j(t), v \rangle & \leq e^{-2K(t-n\tau+\tau)} \langle x_i(n\tau-\tau)-x_j(n\tau-\tau), v \rangle \\
& \quad + \left( 1- \frac{\Lambda}{K}\right)(M_{n-1}-m_{n-1}) (1-e^{-2K(t-n\tau+\tau)}).
\end{split}
\end{equation*}
Since this is valid for all $\ t \in [(n-1)\tau,n\tau], $ taking $t=n\tau$, we obtain
\begin{equation*} 
\begin{split} 
&\langle x_i(n\tau)-x_j(n\tau), v \rangle \cr
&\quad \leq e^{-2K\tau} \langle x_i(n\tau-\tau)-x_j(n\tau-\tau), v \rangle  + \left( 1-\frac{\Lambda}{K}\right)(M_{n-1}-m_{n-1}) (1-e^{-2K\tau}) \\
&\quad \leq e^{-2K\tau} |x_i(n\tau-\tau)-x_j(n\tau-\tau)| |v| + \left( 1-\frac{\Lambda}{K}\right)(M_{n-1}-m_{n-1}) (1-e^{-2K\tau}) \\
&\quad \leq D_{n-1} \left [e^{-2K\tau}+\left (1-\frac{\Lambda}{K}\right )(1-e^{-2K\tau}) \right ] \\
&\quad \leq D_{n-2} \left [1-\frac{\Lambda}{K}(1-e^{-2K\tau}) \right ],
\end{split}
\end{equation*}
where we used Remark \ref{2.3}. Consequently, we deduce that
$$d(n\tau)\leq D_{n-2} \left [1-\frac{\Lambda}{K}(1-e^{-2K\tau}) \right ].$$
Thus, we obtain the desired estimate for the case when the maximum diameter is determined by non-leader agents.

\medskip

\textbf{\emph{Case 2.}} Now, assume 
\[
d(n\tau)=|y_i(n\tau)-y_j(n\tau)|,
\] 
for some $i,j=1,\dots,m$. As in Case 1, we begin by normalizing the difference. Define
\[
v:=\frac{y_i(n\tau)-y_j(n\tau)}{|y_i(n\tau)-y_j(n\tau)|}.
\]
For $t \in [(n-1)\tau,n\tau]$, similarly as in Case 1, using the definition of the system, we write the time derivative of the projection difference between the leaders $y_i$ and $y_j$ as
\begin{equation*}
\begin{split}
  \frac{d}{d t} \langle y_i(t)-y_j(t), v \rangle  
&= \frac{1}{m} \sum_{l=1}^m \psi_{il}^{\tau_1}(t) \langle y_l(t-\tau_2)-y_i(t), v \rangle - \frac{1}{m}\sum_{l=1}^{m} \psi_{jl}^{\tau_1}(t) \langle y_l(t-\tau_2)-y_j(t),v \rangle\\
& = \frac{1}{m}\sum_{l=1}^m \psi_{il}^{\tau_1}(t) (\langle y_l(t-\tau_2), v \rangle - M_{n-1}) + \frac{1}{m}\sum_{l=1}^m \psi_{il}^{\tau_1}(t) (M_{n-1}-\langle y_i(t), v \rangle) \\
&\quad  + \frac{1}{m}\sum_{l=1}^m \psi_{jl}^{\tau_1}(t) (m_{n-1}-\langle y_l(t-\tau_2), v \rangle) + \frac{1}{m}\sum_{l=1}^m \psi_{jl}^{\tau_1}(t) (\langle y_j(t), v \rangle-m_{n-1})\cr
&\leq \frac{\psi_0}{m} \sum_{l=1}^m (\langle y_l(t-\tau_2), v \rangle - M_{n-1})+ K (M_{n-1}-\langle y_i(t), v \rangle)\cr
&\quad + \frac{\psi_0}{m} \sum_{l=1}^m (m_{n-1}-\langle y_l(t-\tau_2), v \rangle)+ K (\langle y_j(t), v \rangle - m_{n-1}).
\end{split}
\end{equation*}
This implies
\begin{equation*}
\begin{split}
  \frac{d}{d t} \langle y_i(t)-y_j(t), v \rangle  
&\leq K (M_{n-1}-m_{n-1}) - K \langle y_i(t)-y_j(t), v \rangle \\
&\quad + \frac{\psi_0}{m}\sum_{l=1}^{m}(\langle y_l(t-\tau_2), v \rangle - M_{n-1})  + \frac{\psi_0}{m} \sum_{l=1}^{m}(m_{n-1}-\langle y_l(t-\tau_2), v \rangle)\cr
&\leq K (M_{n-1}-m_{n-1}) - K \langle y_i(t)-y_j(t), v \rangle + \Lambda(-M_{n-1}+m_{n-1})\cr
&= \left(K-\Lambda\right)(M_{n-1}-m_{n-1}) - K \langle y_i(t)-y_j(t), v \rangle.
\end{split}
\end{equation*}
Applying the Gr\"onwall's lemma over the interval $[(n-1)\tau,t]$ with $t \in [(n-1)\tau,n\tau],$ we find that
\begin{equation*}
\begin{split}
\langle y_i(t)-y_j(t), v \rangle & \leq e^{-K(t-n\tau+\tau)} \langle y_i(n\tau-\tau)-y_j(n\tau-\tau), v \rangle \\
&\quad + \left( 1- \frac{\Lambda}{K}\right)(M_{n-1}-m_{n-1}) (1-e^{-K(t-n\tau+\tau)}).
\end{split}
\end{equation*}
Taking $t=n\tau$, this simplifies to
\[
\begin{split} 
& \langle y_i(n\tau)-y_j(n\tau), v \rangle  \cr
&\quad \leq e^{-K\tau} \langle y_i(n\tau-\tau)-y_j(n\tau-\tau), v \rangle + \left( 1-\frac{\Lambda}{K}\right)(M_{n-1}-m_{n-1}) (1-e^{-K\tau}) \\
&\quad \leq e^{-K\tau} |y_i(n\tau-\tau)-y_j(n\tau-\tau)| |v| + \left( 1-\frac{\Lambda}{K}\right)(M_{n-1}-m_{n-1}) (1-e^{-K\tau}) \\
&\quad \leq D_{n-1} \left [e^{-K\tau}+1-\frac{\Lambda}{K}(1-e^{-K\tau}) \right ] \\
&\quad \leq  D_{n-2} \left [1-\frac{\Lambda}{K}(1-e^{-K\tau}) \right ].
\end{split}
\]
Thus, we conclude that, 
\[
d(n\tau) \leq D_{n-2} \left [1-\frac{\Lambda}{K}(1-e^{-K\tau}) \right ].
\]
This completes the derivation for Case 2.

\medskip

\textbf{\em Case 3.} Now, assume that there exist indices $i\in\{1,\dots,N\}$ and $j\in\{1,\dots,m\}$ \[
d(n\tau)= \vert x_i(n\tau)-y_j(n\tau)\vert. 
\] 
In this mixed case, the maximum diameter is achieved by a non-leader and a leader. As before, we begin by normalizing the difference; define
\[
v:=\frac{x_i(n\tau)-y_j(n\tau)}{|x_i(n\tau)-y_j(n\tau)|}.
\]
Then, the distance can be expressed in the direction $v$ as 
\[
\vert x_i(n\tau)-y_j(n\tau)\vert =\langle x_i(n\tau)-y_j(n\tau), v \rangle. 
\]

For $t\in[(n-1)\tau,n\tau]$, by using almost the same arguments used in the previous cases, we deduce 
\begin{equation*}
\begin{split}
 \frac{d}{d t} \langle x_i(t)-y_j(t), v \rangle 
&\leq 2K (M_{n-1}-m_{n-1}) - 2K \langle x_i(t)-y_j(t), v \rangle \\
&\quad  + \frac{\Lambda}{N}  \sum_{l=1}^{N}(\langle x_l(t-\tau_2), v \rangle - M_{n-1})  + \frac{\Lambda}{m} \sum_{l=1}^{m}(\langle y_l(t-\tau_1), v \rangle - M_{n-1}) \\
&\quad + \frac{\Lambda}{m} \sum_{l=1}^{m} (m_{n-1}-\langle y_l(t-\tau_1), v \rangle)\\
& \leq \left(2K-\Lambda\right)(M_{n-1}-m_{n-1}) - 2K \langle x_i(t)-y_j(t), v \rangle,
\end{split}
\end{equation*}
where we used that for all $l=1, \dots, N, $
\[
\langle x_l(t-\tau_2), v \rangle - M_{n-1} \le 0.
\]
Again, analogously, we obtain
\[
\begin{split} 
d(n\tau) &\leq e^{-2K\tau} \langle x_i(n\tau-\tau)-y_j(n\tau-\tau), v \rangle  + \left( 1- \frac{\Lambda}{2K}\right)(M_{n-1}-m_{n-1}) (1-e^{-2K\tau}) \\
& \leq e^{-2K\tau} |x_i(n\tau-\tau)-y_j(n\tau-\tau)| |v| + \left( 1- \frac{\Lambda}{2K}\right)(M_{n-1}-m_{n-1}) (1-e^{-2K\tau}) \\
& \leq D_{n-1} \left [e^{-2K\tau}+\left (1-\frac{\Lambda}{2K}\right )(1-e^{-2K\tau}) \right ] \\
& \leq  D_{n-2} \left [1-\frac{\Lambda}{2K}(1-e^{-2K\tau}) \right ].
\end{split}
\]

Finally, to complete the proof of the lemma, we define
\begin{equation} \label{C}
C:=1-\frac{\Lambda}{2K}\Bigl(1-e^{-K\tau}\Bigr),
\end{equation}
which yields the desired estimate. 
\end{proof}

\begin{Remark}
From the proof of Lemma \ref{6}, it becomes evident that the normalization chosen for the weight functions in \eqref{weight} plays a crucial role in the dynamics. Roughly speaking, this normalization ensures that the influence exerted by the leaders constitutes half of the total influence on any non-leader. This balanced distribution of influence is essential for deriving the homogeneous contraction estimates that lead to consensus.
\end{Remark}

%
%
%
%
%
%
\subsection{Exponential consensus: Proof of Theorem \ref{decay_Th}}
Now, we prove the consensus result stated in Theorem \ref{decay_Th}.

Let $(x_i(t),y_j(t))$, with $i=1,\dots,N$ and $j=1,\dots,m$, be the solution of \eqref{eqA} with the initial conditions \eqref{ic1A} and \eqref{ic2A}. Our goal is to show that the diameter of the system decays exponentially. To achieve this, we first claim that there exists a constant $\tilde{C}\in (0,1)$ such that
\begin{equation} \label{claim1}
D_{n+1} \leq \tilde{C} D_{n-2}, \quad \forall \, n \geq 2.
\end{equation}

We start by observing that from Lemma \ref{5} the following estimate can be deduced:
\[
D_{n+1} \leq e^{-2K\tau}d(n\tau)+(1-e^{-2K\tau})D_n.
\]
To illustrate this, assume that for some $n\in\mathbb{N}_0$, there exist $s,t\in[n\tau,n\tau+\tau]$ and indices $i,j\in\{1,\dots,N\}$ such that
\[
D_{n+1}=|x_i(s)-x_j(t)|.
\]
Assume that $|x_i(s)-x_j(t)|>0,$ since the case $|x_i(s)-x_j(t)|=0$ is trivial. Define the unit vector
\[
v:=\frac{x_i(s)-x_j(t)}{|x_i(s)-x_j(t)|}.
\]
Then, by definition,
\[
D_{n+1}=\langle x_i(s)-x_j(t),v \rangle.
\]
Using \eqref{eq21} with $t_0=n\tau,$ we obtain 
\begin{equation}\label{eq8}
\begin{split}
\langle x_i(s),v \rangle & \leq e^{-2K(s-n\tau)}\langle x_i(n\tau),v \rangle +(1-e^{-2K(s-n\tau)})M_n \\
& = e^{-2K(s-n\tau)}(\langle x_i(n\tau),v \rangle - M_n) + M_n \\
& \leq e^{-2K\tau}\langle x_i(n\tau),v \rangle +(1-e^{-2K\tau})M_n.
\end{split}
\end{equation}
Analogously, using \eqref{eq22} we have 
\begin{equation} \label{eq9}
\begin{split}
\langle x_j(t),v \rangle
& \geq e^{-2K\tau}\langle x_j(n\tau),v \rangle +(1-e^{-2K\tau})m_n.
\end{split}
\end{equation}
Subtracting \eqref{eq9} from \eqref{eq8} yields
\begin{equation*}
\begin{split}
D_{n+1} & \leq e^{-2K\tau} \langle x_i(n\tau)-x_j(n\tau),v \rangle + (1-e^{-2K\tau})D_n \\
& \leq e^{-2K\tau}d(n\tau)+(1-e^{-2K\tau})D_n.
\end{split}
\end{equation*}
A similar reasoning applies if $D_{n+1}$ is defined by the other two possible forms (involving leader-leader or leader-non-leader differences).

From this and Lemma \ref{6}, we get that
\begin{equation*}
\begin{split}
D_{n+1} & \leq e^{-2K\tau}d(n\tau)+(1-e^{-2K\tau})D_n \\
& \leq e^{-2K\tau}C D_{n-2}+(1-e^{-2K\tau})D_n\\
& \leq e^{-2K\tau}C D_{n-2}+(1-e^{-2K\tau})D_{n-2} \\
& = \left( 1-e^{-2K\tau}(1-C)\right)D_{n-2},
\end{split}
\end{equation*}
where $C$ is defined in \eqref{C}. Thus, setting 
$$ \tilde{C}:=1-e^{-2K\tau}\frac{\Lambda}{2K}(1-e^{-K\tau}),$$
we obtain the claim \eqref{claim1}.

This recursive inequality implies that
\begin{equation} \label{ineq}
D_{3n}\leq \tilde{C}^n D_0, \quad \forall \, n \geq 1.
\end{equation}
Since $\tilde{C}\in (0,1)$, we can rewrite \eqref{ineq} as
\[
D_{3n} \leq e^{-3n\gamma \tau}D_0, 
\]
where 
\[
\gamma:=\frac{1}{3\tau}\ln \left(\frac{1}{\tilde{C}}\right) = -\frac{1}{3\tau}\ln \left( 1-e^{-2K\tau}\frac{\Lambda}{2K}(1-e^{-K\tau})\right).
\]

Finally,  fix any time $t\geq 0$. Then there exists  $n\in\mathbb{N}_0$ such that $t\in [3n\tau-\tau,3n\tau+2\tau]$. By Lemma \ref{3} and the definition of the global diameter \eqref{diameter}, we have
\[
d(t)\leq D_{3n}\leq e^{-3n\gamma\tau}D_0.
\]
Since $t\leq 3n\tau+2\tau$, it follows that
\[
d(t)\leq e^{-\gamma(t-2\tau)}D_0.
\]
This completes the proof of Theorem \ref{decay_Th}.

%
%
%
%
%
%
\section{Global existence of measure-valued solutions of the mean-field models}\label{sec3}
In this section, we establish the global-in-time existence and uniqueness of measure-valued solutions to the mean-field systems \eqref{pde1} and \eqref{pde2}, which arise as formal limits of the particle system \eqref{eqA} when the number of agents tends to infinity.  

We begin with the analysis of the first model \eqref{pde1}, which describes a system with a finite number of leaders interacting with a continuum of followers.
%
%
%
%
%
%

\subsection{Few leaders and many followers system}

We consider the mean-field equation \eqref{pde1}, which models the collective behavior of a large population of followers influenced by a finite number of leaders. The leaders follow prescribed trajectories $\{\bar{y}_j(t)\}_{j=1}^m$, while the followers evolve according to a transport equation driven by the interaction terms. The velocity field $v_t^m(x)$ is defined by the interaction kernel \eqref{flux1}, which combines leader-follower and follower-follower interactions.

To ensure the well-posedness of the transport equation, we require that the influence functions $\phi$ and $\rho$ satisfy aforenoted regularity and boundedness conditions. We denoted by $L_{\phi}$ and $L_{\rho}$ the Lipschitz constants of $\phi$ and $\rho$, respectively. We also write $\bar\rho_j^{\tau_1}(x) := \rho(x, \bar{y}_j(t - \tau_1))$ for the interaction term between the followers and the $j$-th leader.

We now prove that the velocity field $v_t^m(x)$ is globally Lipschitz and bounded under the assumption that the follower density $\nu_t$ has compact support.

\begin{Lemma}\label{lip}
Let $\nu_t \in C([0,T); \mathcal{P}(\mathbb{R}^d))$ be a family of probability measures with compact support, i.e.,
\[
\supp \nu_t \subset B^d(0,R), \quad \forall t \in [0,T],
\]
where $B^d(0,R)$ denotes the ball of radius $R>0$ centered at the origin in $\mathbb{R}^d$. Then the velocity field $v_t^m(x)$ defined by \eqref{flux1} satisfies the following properties:
\begin{itemize}
    \item[(i)] (Lipschitz continuity) There exists a constant $\tilde{K}>0$ such that
    \begin{equation}\label{diff_flux}
    |v_t^m(x) - v_t^m(\tilde{x})| \le \tilde{K}|x - \tilde{x}|, \quad \forall \, x, \tilde{x} \in B^d(0,R), \; t \in [0,T].
    \end{equation}
    \item[(ii)] (Uniform boundedness) There exists a constant $\tilde{C}>0$ such that
    \begin{equation}\label{norm_flux}
    |v_t^m(x)| \le \tilde{C}, \quad \forall \,x \in B^d(0,R), \; t \in [0,T].
    \end{equation}
\end{itemize}
\end{Lemma}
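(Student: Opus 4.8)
The plan is to treat the two contributions to $v_t^m$ separately: the follower--follower integral term
\[
I(x) := \int_{\mathbb{R}^d}\phi(x,y)(y-x)\,\nu_{t-\tau_2}(dy)
\]
and the finite leader sum
\[
J(x) := \frac{1}{m}\sum_{j=1}^m \rho(x,\bar y_j(t-\tau_1))\bigl(\bar y_j(t-\tau_1)-x\bigr).
\]
Before estimating either, I would record a uniform bound on the leader trajectories: since the leaders in \eqref{pde1} form a closed subsystem (they interact only among themselves, exactly as in the first equation of \eqref{eqA}), the argument of Lemma \ref{4} applies verbatim and gives $|\bar y_j(t)| \le R_0$ on $[0,T]$ for a constant $R_0$ depending only on the initial leader data. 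Combined with $\supp \nu_{t-\tau_2}\subset B^d(0,R)$, this ensures that whenever $x \in B^d(0,R)$, every difference $|y-x|$ with $y \in \supp\nu_{t-\tau_2}$ is at most $2R$, and every difference $|\bar y_j(t-\tau_1)-x|$ is at most $R_0 + R$.

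For the uniform bound (ii), I would simply combine these geometric bounds with $\|\phi\|_{L^\infty},\|\rho\|_{L^\infty}\le K$ and the fact that $\nu_{t-\tau_2}$ is a probability measure, obtaining $|I(x)|\le 2KR$ and $|J(x)|\le K(R_0+R)$, hence \eqref{norm_flux} with $\tilde C := 2KR + K(R_0+R)$.

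For the Lipschitz estimate (i), the key device is the add-and-subtract decomposition
\[
\phi(x,y)(y-x) - \phi(\tilde x,y)(y-\tilde x) = \bigl[\phi(x,y)-\phi(\tilde x,y)\bigr](y-x) + \phi(\tilde x,y)(\tilde x - x),
\]
together with its analogue for $\rho$. Applying the Lipschitz bounds $|\phi(x,y)-\phi(\tilde x,y)|\le L_\phi|x-\tilde x|$ and $|\rho(x,\bar y_j)-\rho(\tilde x,\bar y_j)|\le L_\rho|x-\tilde x|$, together with the uniform distance bounds above and $\|\phi\|_{L^\infty},\|\rho\|_{L^\infty}\le K$, yields $|I(x)-I(\tilde x)|\le (2RL_\phi + K)|x-\tilde x|$ and $|J(x)-J(\tilde x)|\le (L_\rho(R_0+R)+K)|x-\tilde x|$. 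Summing gives \eqref{diff_flux} with $\tilde K := 2RL_\phi + L_\rho(R_0+R) + 2K$.

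There is no serious obstacle: the estimate is essentially a bookkeeping exercise built on the triangle inequality and the add-subtract trick. The one point I would flag—and the reason the lemma is stated as it is—is that $\tilde K$ and $\tilde C$ must be shown to depend only on $R$, $K$, $L_\phi$, $L_\rho$, and the leader bound $R_0$, and not on the particular measure $\nu_t$. This time-uniformity, which rests on the time-uniform support radius $R$ and the time-uniform leader bound $R_0$ valid on the compact interval $[0,T]$, is exactly what is needed later to feed the velocity field into a Cauchy--Lipschitz characteristic-flow construction and the resulting push-forward representation \eqref{pushf}.
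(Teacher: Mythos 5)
Your proposal is correct and follows essentially the same route as the paper's proof: the same split of $v_t^m$ into the follower integral and the leader sum, the same add-and-subtract device combined with the Lipschitz and $L^\infty$ bounds on $\phi$ and $\rho$, the compact support of $\nu_t$, and the leader bound from Lemma \ref{4} (your $R_0$ is the paper's $C_0$), arriving at the identical constants $\tilde K = 2RL_\phi + 2K + L_\rho(C_0+R)$ and $\tilde C = K(3R+C_0)$. The only cosmetic difference is that you keep the factor $(y-x)$ intact in the decomposition while the paper splits it into separate $y$ and $x$ contributions, which changes nothing in substance.
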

\begin{proof}
Fix $x, \tilde{x} \in B^d(0,R)$. We split the difference $|v_t^m(x) - v_t^m(\tilde{x})|$ into two parts:
\[
\begin{split}
\vert v_t^m(x)-v_t^m(\tilde{x})\vert & \leq \Big\vert \int_{\mathbb{R}^d} \phi(x,y)(y-x)\nu_{t-\tau_2}(dy)-\int_{\mathbb{R}^d}\phi(\tilde{x},y)(y-\tilde{x})\nu_{t-\tau_2}(dy)\Big\vert \\
& \quad + \Big\vert \frac{1}{m} \sum_{j=1}^m \bar\rho_j^{\tau_1}(x)(\bar y_j(t-\tau_1)-x)-\frac{1}{m} \sum_{j=1}^m \bar\rho_j^{\tau_1}(\tilde{x})(\bar y_j(t-\tau_1)-\tilde{x})\Big\vert \\
& =: I + II.
\end{split}
\]
Using the Lipschitz continuity of $\phi(x,y)$ and the compact support of the measure $\nu_t,$ we find that 
\[
\begin{split}
I & \leq \Big\vert \int_{\mathbb{R}^d} \left( \phi(x,y)-\phi(\tilde{x},y)\right) y \nu_{t-\tau_2}(dy)\Big\vert
 + \Big\vert \int_{\mathbb{R}^d}\phi(x,y)x \nu_{t-\tau_2}(dy)-\int_{\mathbb{R}^d}\phi(\tilde{x},y)\tilde{x}\nu_{t-\tau_2}(dy)\Big\vert \\
 & \leq RL_{\phi}\vert x-\tilde{x}\vert + \Big\vert \int_{\mathbb{R}^d} \left(\phi(x,y)-\phi(\tilde{x},y)\right)x \nu_{t-\tau_2}(dy)\Big\vert 
  + \Big\vert \int_{\mathbb{R}^d} \phi(\tilde{x},y)(x-\tilde{x})\nu_{t-\tau_2}(dy) \Big\vert \\
  & \leq (K+2RL_{\phi})\vert x-\tilde{x}\vert.
\end{split}
\]
Similarly, we estimate $II$ as
\[
\begin{split}
II & \leq \frac{1}{m} \sum_{j=1}^m \vert \bar\rho_j^{\tau_1}(x)-\bar\rho_j^{\tau_1}(\tilde{x}) \vert \vert \bar y_j(t-\tau_1) \vert + \frac{1}{m} \sum_{j=1}^m \vert \bar\rho_j^{\tau_1}(x) - \bar\rho_j^{\tau_1}(\tilde{x}) \vert \vert x \vert   + \frac{1}{m} \sum_{j=1}^m \vert \bar\rho_j^{\tau_1}(\tilde{x}) \vert \vert x-\tilde{x} \vert \\
& \leq [L_{\rho}(C_0+R)+K] \vert x - \tilde{x} \vert,
\end{split}
\]
where $C_0 > 0$ is the bound on leader trajectories from Lemma \ref{4}. Combining the bounds yields \eqref{diff_flux} with $\tilde{K} := 2RL_\phi + 2K + L_\rho(C_0 + R)$.

To prove \eqref{norm_flux}, we estimate directly:
\[
\begin{split}
\vert v_t^m(x)\vert & \leq \Big\vert \int_{\mathbb{R}^d} \phi(x,y)(y-x)\nu_{t-\tau_2}(dy)\Big\vert + \frac{1}{m} \Big\vert \sum_{j=1}^m \bar\rho_j^{\tau_1}(t)(\bar y_j(t-\tau_1)-x) \Big\vert \\
& \leq 2KR + K(C_0+R),
\end{split}
\]
which gives \eqref{norm_flux} with $\tilde{C} := K(3R + C_0)$.
\end{proof}

Now, we are in a position to prove the global existence and uniqueness of solutions stated in Theorem~\ref{1.7} for the mean-field system \eqref{pde1}.

\begin{proof}[Proof of Theorem \ref{1.7}: existence and uniqueness in Case (i)]
We begin by considering the ODE system describing the evolution of the leaders $\{y_j(t)\}_{j=1}^m$. Since the interaction functions are Lipschitz continuous and bounded, standard results from the theory of delay differential equations (see, e.g., \cite{Halanay, Hale}) ensure existence and uniqueness of solutions. Specifically, by applying the Banach fixed-point theorem on small time intervals and iterating the solution step-by-step, we can construct a unique global-in-time solution for the leader dynamics.

We now turn to the second component of the system \eqref{pde1}, which is a continuity equation driven by a delayed, nonlocal velocity field. To obtain local-in-time existence and uniqueness of measure-valued solutions, we apply Lemma \ref{lip}, which provides Lipschitz and boundedness estimates on the velocity field $v_t^m(x)$, together with \cite[Theorem 3.10]{Carrillo}, which guarantees well-posedness under such conditions, provided that the solution remains compactly supported.

Thus, to extend this local-in-time solution to a global one, it is sufficient to control the growth of the support of $\nu_t$. We do this by estimating the maximal spatial extension of the support. First, since the leaders' dynamics do not directly depend on the follower particles $\{x_i\}_{i=1}^N$, we focus on the leader dynamics and define the bound
\[
C_0^y := \max_{s \in [-\tau,0]} \max_{j=1,\dots,m} |y_j(s)|.
\]
By Lemma \ref{4}, we then have the uniform-in-time bound
\[
|y_j(t)| \leq C_0^y, \quad \text{for all } t \geq 0 \, \text{ and } j = 1,\dots,m.
\]

Next, define the maximal radius of the support of the measure \( \nu_t \) as
\begin{equation}\label{R}
R_X(t) := \max \left\{ \max_{s \in [-\tau,t]} \sup_{x \in \overline{\supp \nu_s}} |x|, \; C_0^y \right\}.
\end{equation}
We now perform a continuity argument to control $R_X(t)$. Similarly to Lemma \ref{b}, for a fixed $\epsilon > 0$, we define a set
\[
 \mathcal{T}^{\epsilon}:= \left\{ t >0 : R_X(s) < R_X(0)+\epsilon, \ \forall s \in [0,t) \right\}.
 \]
By continuity of trajectories,  $\mathcal{T}^{\epsilon}$ is nonempty. Let $T_{\epsilon}:= \sup \mathcal{T}^{\epsilon}$. Our goal is to show that  $T_\epsilon \ge \tau^*$, where $\tau^* := \min\{\tau_1, \tau_2\}$. Suppose, for contradiction, that $T_{\epsilon} < \tau^*$. Then, we find
\begin{equation}\label{A}
\lim_{t \rightarrow T_{\epsilon}^-} R_X(t) = R_X(0)+\epsilon
\end{equation}
and 
\[
R_X(t) < R_X(0) + \epsilon, \quad \ \forall \ t < T_{\epsilon}
\]
Consider the system of characteristics $X(t;x):[0,T_{\epsilon}] \times \mathbb{R}^d \rightarrow \mathbb{R}^d$ associated to the continuity equation in \eqref{pde1}, given by 
\[
\begin{cases}
\frac{d}{d t} X(t;x)=v_t(X(t;x)), \\
X(0;x)=x
\end{cases}
\]
for $x \in \mathbb{R}^d.$ Then, applying Lemma \ref{lip}, this system admits a unique solution on the time interval $[0,T_{\epsilon}]$. Note that the measure-valued solution is transported by the characteristic flow, namely
\[
\nu_t = X(t;\cdot)\#\nu_0 , \qquad t\in[0,T_{\epsilon}].
\]
In particular, if $x\in\supp\nu_0$ then $X(t;x)\in\supp\nu_t$ for all $t\in[0,T_{\epsilon}]$.

Let us simplify notation by writing $X(t;x)$ as $X(t)$ and denote $\bar \rho_j^{\tau_1}(X(t)) := \rho(X(t), \bar y_j(t-\tau_1))$. Then, we estimate
\begin{equation*}
\begin{split}
\frac{1}{2} \frac{d}{d t} \vert X(t)\vert^2 & = \langle \dot{X(t)}, X(t) \rangle \\
& = \int_{\mathbb{R}^d} \phi(X(t),y)\langle y-X(t),X(t)\rangle\nu_{t-\tau_2}(dy)  +\frac{1}{m} \sum_{j=1}^m \bar\rho_j^{\tau_1}(X(t)) \langle \bar y_j(t-\tau_1)-X(t),X(t) \rangle \\
& = \int_{\mathbb{R}^d} \phi(X(t),y)\langle y,X(t)\rangle\nu_{t-\tau_2}(dy) - \int_{\mathbb{R}^d} \phi(X(t),y)\vert X(t)\vert^2 \nu_{t-\tau_2}(dy) \\
& \quad + \frac{1}{m} \sum_{j=1}^m \bar\rho_j^{\tau_1}(X(t)) \left( \langle \bar y_j(t-\tau_1),X(t) \rangle - \vert X(t) \vert^2 \right) 
\end{split}
\end{equation*}
Using the definition \eqref{R} and the fact that \( |y|, |\bar y_j| \leq R_X(t) \), we deduce
\begin{align*}
&\frac{1}{2} \frac{d}{d t} \vert X(t)\vert^2  \cr
&\quad \leq  \vert X(t)\vert \left( \int_{\mathbb{R}^d} \phi(X(t),y) (R_X(t)- \vert X(t) \vert) \nu_{t-\tau_2}(dy) +\frac{1}{m} \sum_{j=1}^m \bar\rho_j^{\tau_1}(X(t))(R_X(t)-\vert X(t)\vert \right).
\end{align*}
Since $R_X(t) - |X(t)| \geq 0$ for $t < T_{\epsilon}$, and $\phi$ and $\bar\rho_j^{\tau_1}$ are bounded by \( K \), we find
\[
\frac{d}{dt} |X(t)| \leq 4K (R_X(0) + \epsilon - |X(t)|).
\]
By Gr\"onwall's inequality, this implies that $|X(t)| < R_X(0) + \epsilon$ on $[0, T_{\epsilon}]$, contradicting \eqref{A}. Thus,  $T_{\epsilon} \geq \tau^*$. Since $\epsilon > 0$ was arbitrary, we conclude that the support of \( \nu_t \) remains uniformly bounded on \( [0, \tau^*] \), and the solution can be extended beyond \( \tau^* \).
Repeating this argument iteratively on time intervals of length \( \tau^* \), we construct a unique global-in-time solution. 

Finally, as recalled above, following \cite{Carrillo}, we obtain that the measure-valued solution satisfies the weak formulation \eqref{measure}, and that the corresponding push-forward relation \eqref{pushf} holds.
\end{proof}
%
%
%
%
%
%
\subsection{Infinite population limit for both leaders and followers}

We now consider the case in which both populations, leaders and followers, consist of infinitely many agents. For the mean-field system \eqref{pde2}, we establish the existence and uniqueness of measure-valued solutions, using arguments analogous to those employed in the previous subsection. 

As before, we assume that the interaction kernels $\psi(x,y)$, $\phi(x,y)$, and $\rho(x,y)$ appearing in the fluxes \eqref{flux2} and \eqref{flux3} are positive, bounded, and Lipschitz continuous. Let us denote by
$$ L := \max\{L_{\psi}, L_{\phi}, L_{\rho}\}, $$
where $L_{\psi}, L_{\phi}, L_{\rho}$ are the respective Lipschitz constants.

We begin with a regularity estimate on the velocity fields induced by the measure solutions.

\begin{Lemma}\label{lip2}
Consider the system $\eqref{pde2}$ subject to the initial data $\eqref{init2}$. Given a time $T > 0$, suppose that $\bar\mu, \bar\nu \in C([0,T); \mathcal{P}_{\infty}(\mathbb{R}^d))$ are measures with uniformly compact supports:
\[
\supp \bar\mu_t \subset B^d(0,R_1), \quad \supp \bar\nu_t \subset B^d(0,R_2), \quad \forall\, t \in [0,T), 
\]
where $B^d(0,R_i)$ denotes the ball of radius $R_i > 0$ centered at the origin in $\mathbb{R}^d$ for $i = 1, 2$. 

Then, the velocity fields $\bar u_t$ and $\bar v_t$ defined in $\eqref{flux2}$--$\eqref{flux3}$ satisfy the following estimates:
\begin{itemize}
  \item (Lipschitz continuity) There exist constants $K_1, K_2 > 0$ such that
  $$
  |\bar u_t(x) - \bar u_t(\tilde{x})| \leq K_1 |x - \tilde{x}|, \quad |\bar v_t(z) - \bar v_t(\tilde{z})| \leq K_2 |z - \tilde{z}|,
  $$
  for all $x, \tilde{x} \in B^d(0,R_1)$, $z, \tilde{z} \in B^d(0,R_2)$, and $t \in [0,T]$.

  \item (Uniform boundedness) There exist constants $C_1, C_2 > 0$ such that
  $$
  |\bar u_t(x)| \leq C_1, \quad |\bar v_t(z)| \leq C_2,
  $$
  for all $x \in B^d(0,R_1)$, $z \in B^d(0,R_2)$, and $t \in [0,T]$.
\end{itemize}
\end{Lemma}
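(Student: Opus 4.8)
The plan is to follow the same add-and-subtract strategy used in the proof of Lemma \ref{lip}, treating each velocity field as an integral (or a sum of integrals) of a bounded Lipschitz kernel against a displacement, and exploiting that on the relevant support the integration variable is uniformly bounded. The only genuinely new bookkeeping is that two distinct support radii $R_1$ and $R_2$ now appear, so at each step I must match the integration variable $y$ to the measure it is integrated against ($|y|\le R_1$ under $\bar\mu_{t-\tau_1}$, $|y|\le R_2$ under $\bar\nu_{t-\tau_2}$) and match each evaluation point to its own ball. Throughout I use the uniform bound $K$ from \eqref{K} and the common Lipschitz constant $L$.

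For $\bar u_t$, which is a single integral against $\bar\mu_{t-\tau_1}$, I would fix $x,\tilde x\in B^d(0,R_1)$ and write
\[
\bar u_t(x)-\bar u_t(\tilde x)=\int_{\mathbb{R}^d}\bigl(\psi(x,y)-\psi(\tilde x,y)\bigr)y\,\bar\mu_{t-\tau_1}(dy)-\int_{\mathbb{R}^d}\bigl(\psi(x,y)x-\psi(\tilde x,y)\tilde x\bigr)\bar\mu_{t-\tau_1}(dy).
\]
Then the Lipschitz continuity of $\psi$, the bound $|y|\le R_1$ on $\supp\bar\mu_{t-\tau_1}$, the further splitting $\psi(x,y)x-\psi(\tilde x,y)\tilde x=(\psi(x,y)-\psi(\tilde x,y))x+\psi(\tilde x,y)(x-\tilde x)$, and the bound $\psi\le K$ yield $K_1:=K+2R_1L$. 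Boundedness is immediate, since $|y-x|\le 2R_1$ on the support gives $|\bar u_t(x)|\le 2KR_1=:C_1$.

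For $\bar v_t$ I would treat its two integrals separately. The $\phi$-integral is exactly the term $I$ estimated in Lemma \ref{lip}, now against $\bar\nu_{t-\tau_2}$ with $|y|\le R_2$ and $z,\tilde z\in B^d(0,R_2)$, contributing $(K+2R_2L)|z-\tilde z|$. The $\rho$-integral is handled identically but against $\bar\mu_{t-\tau_1}$, so here $|y|\le R_1$ while the evaluation point satisfies $|z|\le R_2$; the same splitting contributes $(K+L(R_1+R_2))|z-\tilde z|$. Summing gives $K_2:=2K+L(R_1+3R_2)$. For boundedness, $|y-z|\le 2R_2$ in the $\phi$-integral and $|y-z|\le R_1+R_2$ in the $\rho$-integral, whence $|\bar v_t(z)|\le 2KR_2+K(R_1+R_2)=K(R_1+3R_2)=:C_2$.

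I expect no serious obstacle: the argument is a direct adaptation of Lemma \ref{lip}, and all estimates are uniform in $t\in[0,T)$ precisely because the support bounds $R_1,R_2$ are. The only point requiring care is the bookkeeping just described, namely ensuring that each displacement and each kernel argument is controlled by the radius of the correct population, which is what makes the constants $K_1,K_2,C_1,C_2$ depend asymmetrically on $R_1$ and $R_2$.
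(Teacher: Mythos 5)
Your proposal is correct and follows exactly the route the paper intends: the paper's proof of Lemma \ref{lip2} simply invokes the argument of Lemma \ref{lip} and records the same constants $K_1 = K + 2R_1 L$, $K_2 = 2K + L(R_1 + 3R_2)$, $C_1 = 2KR_1$, $C_2 = K(R_1 + 3R_2)$ that you derive. Your careful bookkeeping of which radius ($R_1$ for $\bar\mu$, $R_2$ for $\bar\nu$) controls each integration variable and evaluation point is precisely the content left implicit in the paper.
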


\begin{proof}
Arguing in the same way as in the proof of Lemma $\ref{lip}$, we estimate the differences in the velocity fields with constants:
\[
K_1 := K + 2R_1 L, \qquad K_2 = 2K+L(R_1+3R_2).
\]
The uniform boundedness of the velocity fields follows similarly with constants:
\[
C_1 := 2K R_1, \qquad C_2= K (R_1+3R_2).
\]
This completes the proof.
\end{proof}

With the above estimates in hand, we now prove the existence and uniqueness result for the mean-field system \eqref{pde2}.

\begin{proof}[Proof of Theorem \ref{1.7}: existence and uniqueness in Case (ii)]
We proceed in parallel to the argument for Case (i) in the previous subsection. Applying Lemma $\ref{lip2}$ and the framework established in \cite{Carrillo}, we first deduce the global-in-time existence and uniqueness of a measure-valued solution $\bar\mu \in C([0,T); \mathcal{P}_\infty(\mathbb{R}^d))$.

We then define the support control quantity
\[
\tilde{R}_X(t) := \max_{-\tau \leq s \leq t} \left\{ \max_{x \in \overline{\supp \bar\mu_s}} |x|, \max_{z \in \overline{\supp \bar\nu_s}} |z| \right\},
\]
and show that $\tilde{R}_X(t)$ remains uniformly bounded in time. The regularity of the velocity fields and compact support of the initial data then yield global-in-time existence and uniqueness of the second component $\bar\nu \in C([0,T); \mathcal{P}_\infty(\mathbb{R}^d))$, thereby completing the proof.
\end{proof}

%
%
%
%
%
%
\section{Stability and consensus in the mean-field regime}\label{sec4}
 In this section, we analyze the stability and asymptotic behavior of solutions to the mean-field systems introduced in Section \ref{sec3}. We begin by establishing a stability estimate with respect to initial data, measured in the Wasserstein distance. This will be followed by an investigation of large-time consensus behavior.

%
%
%
%
%
%
\subsection{Wasserstein stability estimate}

We establish a continuous dependence result for solutions of the mean-field model \eqref{pde1} in terms of their initial data, using the Wasserstein distance framework. We begin by recalling the definition of the Wasserstein distance.

\begin{Definition}
Let $\nu_t^1,\nu_t^2 \in \mathcal{P}(\mathbb{R}^d)$ two probability measures in $\mathbb{R}^d.$ We define the Wasserstein distance of order $1 \leq p < \infty$ between $\nu_t^1$ and $\nu_t^2$ the quantity 
$$ d_p(\nu_t^1,\nu_t^2):= \inf _{\pi \in \Pi(\nu_t^1,\nu_t^2)} \left( \int_{\mathbb{R}^d \times \mathbb{R}^d} \vert x-y \vert \pi(d x,d y) \right)^{\frac{1}{p}},$$
and for $p=\infty,$ limiting case $p \rightarrow +\infty,$ 
$$ d_{\infty}(\nu_t^1,\nu_t^2):= \inf _{\pi \in \Pi(\nu_t^1,\nu_t^2)} \left\{ \sup_{(x,y)\in \supp(\pi)}\vert x-y \vert \right\}. $$ 
\end{Definition}

The following lemma provides a stability estimate for solutions of \eqref{pde1} with respect to initial perturbations.

\begin{Lemma}\label{4.3}
Let $(\bar y^1,\nu_t^1)$ and $(\bar y^2, \nu_t^2)$ be two solutions of \eqref{pde1} constructed in Theorem \ref{1.7}, corresponding to initial data $(\bar y^{1,0}, g^1)$ and $(\bar y^{2,0}, g^2)$, respectively. Then, there exists a constant $\tilde{C}>0$, depending on $T$ but independent of $p$, such that for all $t \in [0,T]$ and $p \in [1,\infty)$,
\begin{align*}
&d_p(\nu_t^1,\nu_t^2) + \left( \frac1m \sum_{j=1}^m |\bar y_j^1(t) - \bar y_j^2(t)|^p\right)^\frac1p \cr
&\quad \leq \tilde C \sup_{s \in [-\tau,0]}d_p(g_s^1,g_s^2) + \tilde C \sup_{s \in [-\tau,0]}\left( \frac1m \sum_{j=1}^m |\bar y_j^{1,0}(s) - \bar y_j^{2,0}(s)|^p\right)^\frac1p.
\end{align*}
In the case $p=\infty$, the following bound holds:
\[
d_\infty(\nu_t^1,\nu_t^2) +  \max_{i=1,\dots, m} |\bar y_i^1(t) - \bar y_i^2(t)|  \leq \tilde C \sup_{s \in [-\tau,0]}d_\infty(g_s^1,g_s^2) + \tilde C \sup_{s \in [-\tau,0]}\max_{i=1,\dots, m} |\bar y_i^{1,0}(s) - \bar y_i^{2,0}(s)|.
\]
\end{Lemma}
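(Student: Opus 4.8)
The plan is to exploit the push-forward representation $\nu_t^k = X^k(t;\cdot)\#\nu_0^k$ ($k=1,2$) furnished by Theorem \ref{1.7} and to transport a single coupling of the initial data along the two characteristic flows, closing the estimate by optimal transport arguments. Since the leader subsystem in \eqref{pde1} is autonomous (leaders feel only other leaders), I would first settle the finite-dimensional part. Writing $Y(t):=\bigl(\tfrac1m\sum_{j}|\bar y_j^1(t)-\bar y_j^2(t)|^p\bigr)^{1/p}$, I subtract the two leader ODEs and add and subtract a mixed term to isolate the increments of $\psi$ and of the positions. Using that $\psi$ is bounded by $K$ and Lipschitz, and that all leader trajectories remain bounded by $C_0$ (Lemma \ref{4}), one gets the pointwise bound $\tfrac{d}{dt}|\bar y_i^1-\bar y_i^2|\le C\bigl(|\bar y_i^1-\bar y_i^2| + \tfrac1m\sum_j|\bar y_j^1(t-\tau_1)-\bar y_j^2(t-\tau_1)|\bigr)$; Minkowski's inequality in the index $j$ then yields a delay differential inequality $\tfrac{d}{dt}Y(t)\le C\,Y(t) + C\,Y(t-\tau_1)$ whose constants are independent of $p$.

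For the followers, fix an optimal coupling $\pi$ for $d_p$ between the initial data and set $\pi_t := (X^1(t;\cdot),X^2(t;\cdot))\#\pi$, which couples $\nu_t^1$ and $\nu_t^2$, so that $d_p(\nu_t^1,\nu_t^2)^p \le \int|X^1(t;x)-X^2(t;y)|^p\,\pi(dx,dy)=:Q(t)^p$. Differentiating along the flows gives $\tfrac{d}{dt}(X^1-X^2)=v_t^{m,1}(X^1)-v_t^{m,2}(X^2)$, which I split as $[v_t^{m,1}(X^1)-v_t^{m,1}(X^2)]+[v_t^{m,1}(X^2)-v_t^{m,2}(X^2)]$. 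The first bracket is controlled by the Lipschitz constant $\tilde K$ of Lemma \ref{lip}, contributing $\tilde K|X^1-X^2|$. For the second bracket I rewrite the difference of the $\phi$-integrals against $\nu^1_{t-\tau_2}$ and $\nu^2_{t-\tau_2}$ as one integral against the transported coupling $\pi_{t-\tau_2}$; since $y\mapsto\phi(z,y)(y-z)$ is Lipschitz on the uniformly compact supports, this term is bounded by $C\int|X^1(t-\tau_2;x')-X^2(t-\tau_2;y')|\,\pi(dx',dy')$, while the leader part of $v^{m,1}-v^{m,2}$ is bounded by $\tfrac{C}{m}\sum_j|\bar y_j^1(t-\tau_1)-\bar y_j^2(t-\tau_1)|$.

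Taking the $L^p(\pi)$-norm of the resulting integral inequality and applying Minkowski's integral inequality together with Jensen's inequality (which, as $\pi$ is a probability measure, converts the $L^1$-integral of the coupling distance into $Q(t-\tau_2)$ with no $p$-dependent constant) yields $Q(t)\le Q(0)+\int_0^t[\tilde K\,Q(s)+C\,Q(s-\tau_2)+C\,Y(s-\tau_1)]\,ds$, with the delayed terms read off the prescribed histories on $[-\tau,0]$. Summing this with the leader inequality and running a Gronwall argument for delay inequalities over successive intervals of length $\tau^\ast=\min\{\tau_1,\tau_2\}$ closes the estimate on $[0,T]$ with a constant $\tilde C=\tilde C(T)$ of exponential type $e^{CT}$, independent of $p$. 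The case $p=\infty$ proceeds identically once integrals against $\pi$ are replaced by suprema over $\supp\pi$ and the $\ell^p$-leader norm by the maximum.

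I expect the main obstacle to be the bookkeeping of the delays in the Gronwall step: because the delayed arguments $Q(s-\tau_2)$ and $Y(s-\tau_1)$ reach back into the initial histories whenever $s<\tau^\ast$, one must advance the estimate interval-by-interval and absorb the history contributions into $\sup_{s \in [-\tau,0]}d_p(g_s^1,g_s^2)$ and $\sup_{s \in [-\tau,0]}Y(s)$, taking care at each stage that the accumulated constant remains independent of $p$. The use of a single fixed coupling $\pi$ transported by both flows, rather than re-optimizing at each time, is what keeps the measure-difference terms expressible through $Q$ and thereby makes the $p$-independence transparent.
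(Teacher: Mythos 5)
Your proposal is correct and follows essentially the same route as the paper's proof: both transport a single optimal pairing of the initial data along the two characteristic flows (the paper via the map $\mathcal{T}^t = X^2(t;\cdot)\circ\mathcal{S}_0\circ (X^1(t;\cdot))^{-1}$, you via a transported coupling $\pi$), bound the velocity discrepancy using Lipschitz continuity and boundedness of the kernels on the uniformly compact supports, derive the pair of delay differential inequalities for the follower quantity and the leader quantity with $p$-independent constants via H\"older/Minkowski, and close with a step-by-step delay-Gronwall argument in which delayed arguments falling in $[-\tau,0]$ are absorbed into $\sup_{s\in[-\tau,0]}d_p(g^1_s,g^2_s)$ and the leader history term, exactly as the paper does by extending $\mathcal{T}^s$ to $[-\tau,0]$ as optimal maps between the histories. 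Your two deviations are cosmetic: splitting $v_t^{m,1}(X^1)-v_t^{m,2}(X^2)$ into a same-field two-point term (handled by Lemma \ref{lip}) plus a two-field same-point term, where the paper splits the kernel differences directly, and working with a coupling rather than a transport map, which is if anything slightly more robust since optimal maps need not exist when the data are atomic (as in the empirical-measure application of Section \ref{sec4}).
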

\begin{proof}
We begin by constructing the system of characteristics associated with each solution. For $i = 1,2$, define $X^i(t;x): [0,T] \times \mathbb{R}^d \to \mathbb{R}^d$ as the flow map solving
$$
\begin{cases}
\displaystyle \frac{d}{d t}X^i(t;x)= v_t^{m,i}(X^i(t;x)), \quad  x \in \mathbb{R}^d,\\[2mm]
X^i(0;x)=x,
\end{cases}
$$
where $v_t^{m,i}$ is given by the formula \eqref{flux1}. By Theorem \ref{1.7}, the flows $X^i$ are well-defined on the interval $[0,T]$. By standard properties of transport by characteristics, we have $\nu_t^i = X^i(t;\cdot) \# \nu_0^i,$ for all $t\in [0,T]$ and $i=1,2$. As before, we define the quantity $R_X^i(t)$ as in \eqref{R}. Let $\mathcal{S}_0: \mathbb{R}^d \to \mathbb{R}^d$ be the optimal transport map pushing $\nu_0^1$ to $\nu_0^2$ with respect to the $p$-Wasserstein distance, i.e.,
\[
\nu_0^2 = \mathcal{S}_0 \# \nu_0^1, \quad d_p(\nu_0^1,\nu_0^2) = \displaystyle \left(\int_{\mathbb{R}^d} \vert x-\mathcal{S}_0(x)\vert^p \nu_0^1(dx)\right)^\frac{1}{p}.
\]
Therefore, defining a map 
$\mathcal{T}^t:= X^2(t;\cdot) \circ \mathcal{S}_0 \circ (X^1(t; \cdot))^{-1},$ for $t \in [0,T]$, we have
\begin{equation}\label{T}
\mathcal{T}^t \# \nu_t^1 = \nu_t^2,
\end{equation} 
and thus
\[
d_p(\nu_t^1,\nu_t^2) \leq \left( \int_{\mathbb{R}^d} \vert x-\mathcal{T}^t(x) \vert ^p \nu_t^1(dx) \right)^\frac{1}{p} =: \theta_p(t).
\]
Using the identity $\mathcal{T}^t \circ X^1(t; \cdot)= X^2(t;\cdot)\circ \mathcal{S}_0$, we rewrite $\theta_p(t)$ as
\[
\theta_p(t) = \left(\int_{\mathbb{R}^d} \vert X^1(t;x)-X^2(t;\mathcal{S}_0(x))\vert^p \nu_0^1(dx)\right)^\frac1p.
\]
To incorporate the time-delay structure, we extend $\mathcal{T}^s$ on $[-\tau, 0]$ as the optimal transport map between $g_s^1$ and $g_s^2$, and define
\[
\theta_p(s):=d_p(g_s^1,g_s^2)=\left(\int_{\mathbb{R}^d}\vert x-\mathcal{T}^s(x)\vert^p g_s^1(dx)\right)^\frac{1}{p}, \quad s \in [-\tau,0].
\]

Next, we estimate the time derivative of $\theta_p(t)^p$. For $t \in (0,T)$,
\begin{equation*} \begin{split}
\frac{d}{d t} \theta_p(t)^p & \leq p \int_{\mathbb{R}^d} \vert X^1(t;x)-X^2(t;\mathcal{S}_0(x))\vert^{p-1}   \vert v_s^{m,1}(X^1(t;x))-v_s^{m,2}(X^2(t;\mathcal{S}_0(x)))\vert \nu_0^1(dx) \\
& =p \int_{\mathbb{R}^d} \vert x-\mathcal{T}^t(x)\vert^{p-1} \vert v_t^{m,1}(x)-v_t^{m,2}(\mathcal{T}^t(x))\vert \nu_t^1(dx)\cr
&\leq p \theta_p(t)^{p-1}\left( \int_{\R^d} |v_t^{m,1}(x)-v_t^{m,2}(\mathcal{T}^t(x))|^p \nu_t^1(dx)\right)^\frac1p,
\end{split} 
\end{equation*}
and thus
\[
\frac{d}{dt}\theta_p(t) \leq \left( \int_{\R^d} |v_t^{m,1}(x)-v_t^{m,2}(\mathcal{T}^t(x))|^p \nu_t^1(dx)\right)^\frac1p.
\]
Now, we estimate the velocity difference as
\begin{align*}
&v_t^{m,1}(x)-v_t^{m,2}(\mathcal{T}^t(x))\cr
&\quad = \int_{\R^d} \phi(x,y)(y-x) \nu^1_{t-\tau_2}(dy) - \int_{\R^d} \phi(\mathcal{T}^t(x),y)(y-\mathcal{T}^t(x)) \nu^2_{t-\tau_2}(dy)\cr
&\qquad + \frac1m \sum_{j=1}^m \rho(x, \bar y_j^1(t-\tau_1))(\bar y_j^1(t-\tau_1) - x) - \frac1m \sum_{j=1}^m \rho(\mathcal{T}^t(x), \bar y_j^2(t-\tau_1))(\bar y_j^2(t-\tau_1) - \mathcal{T}^t(x))\cr
&\quad =: I + II.
\end{align*}
Here, using \eqref{T}, 
\begin{align*}
I&= \int_{\R^d} \phi(x,y)(y-x) \nu^1_{t-\tau_2}(dy) - \int_{\R^d} \phi(\mathcal{T}^t(x),\mathcal{T}^{t-\tau_2}(y))(\mathcal{T}^{t-\tau_2}(y)-\mathcal{T}^t(x)) \nu^1_{t-\tau_2}(dy)\cr
&= \int_{\R^d} \big( \phi(x,y) - \phi(\mathcal{T}^t(x),\mathcal{T}^{t-\tau_2}(y))\big) (y-x) \nu^1_{t-\tau_2}(dy) \cr
&\quad   + \int_{\R^d} \phi(\mathcal{T}^t(x),\mathcal{T}^{t-\tau_2}(y))((y-x) - (\mathcal{T}^{t-\tau_2}(y)-\mathcal{T}^t(x))) \nu^1_{t-\tau_2}(dy).
\end{align*}
Using the Lipschitz continuity and boundedness of the function $\phi$, we find
\begin{equation*}
\begin{split}
& \int_{\mathbb{R}^d}\vert \phi(x,y)-\phi(\mathcal{T}^t(x),\mathcal{T}^{t-\tau_2}(y))\vert \vert y-x\vert \nu^1_{t-\tau_2}(dy) \\
&\quad \leq L_\phi (\vert x \vert + R_X^1(t))\left[ \vert x-\mathcal{T}^t(x)\vert + \int_{\mathbb{R}^d} \vert y-\mathcal{T}^{t-\tau_2}(y)\vert \nu^1_{t-\tau_2}(dy)\right]
\end{split}
\end{equation*}
and
\begin{equation*}
\begin{split}
& \int_{\mathbb{R}^d}\vert \phi(\mathcal{T}^t(x),\mathcal{T}^{t-\tau_2}(y))\vert \vert y-x-(\mathcal{T}^{t-\tau_2}(y)-\mathcal{T}^t(x))\vert \nu^1_{t-\tau_2}(dy) \\
&\quad \leq K \vert x-\mathcal{T}^t(x)\vert + K \int_{\mathbb{R}^d} \vert y-\mathcal{T}^{t-\tau_2}(y)\vert \nu^1_{t-\tau_2}(dy).
\end{split}
\end{equation*}
This implies
\[
|I| \leq \big( L_\phi (\vert x \vert + R_X^1(t)) + K\big) \left[ \vert x-\mathcal{T}^t(x)\vert + \int_{\mathbb{R}^d} \vert y-\mathcal{T}^{t-\tau_2}(y)\vert \nu^1_{t-\tau_2}(dy)\right].
\]
Similarly, we estimate
\begin{align*}
|II| &\leq  \frac1m \sum_{j=1}^m \big| \rho(x, \bar y_j^1(t-\tau_1)) -  \rho(\mathcal{T}^t(x), \bar y_j^2(t-\tau_1))\big| |\bar y_j^1(t-\tau_1) - x|\cr
&\quad  + \frac1m \sum_{j=1}^m    \rho(\mathcal{T}^t(x), \bar y_j^2(t-\tau_1)) \big|(\bar y_j^1(t-\tau_1) - x) - (\bar y_j^2(t-\tau_1) - \mathcal{T}^t(x))\big|\cr
& \leq \big(L_\rho(C_0 + |x|) + K \big) \left[ |x-\mathcal{T}^t(x)| + \frac1m \sum_{j=1}^m |\bar y_j^1(t-\tau_1) - \bar y_j^2(t-\tau_1)|  \right].
\end{align*}
Combining the estimates for $I$ and $II$, we deduce
\begin{align*}
&|v_t^{m,1}(x)-v_t^{m,2}(\mathcal{T}^t(x))|\cr
&\quad \leq C(1 + |x|)\left[ |x-\mathcal{T}^t(x)| + \int_{\mathbb{R}^d} \vert y-\mathcal{T}^{t-\tau_2}(y)\vert \nu^1_{t-\tau_2}(dy) + \frac1m \sum_{j=1}^m |\bar y_j^1(t-\tau_1) - \bar y_j^2(t-\tau_1)|  \right]
\end{align*}
for some constant $C>0$  independent of $p$. Using H\"older's inequality, we get
\[
\int_{\mathbb{R}^d} \vert y-\mathcal{T}^{t-\tau_2}(y)\vert \nu^1_{t-\tau_2}(dy) \leq \left(\int_{\mathbb{R}^d} \vert y-\mathcal{T}^{t-\tau_2}(y)\vert^p \nu^1_{t-\tau_2}(dy)\right)^\frac1p
\]
and
\[
\frac1m \sum_{j=1}^m |\bar y_j^1(t-\tau_1) - \bar y_j^2(t-\tau_1)| \leq \left( \frac1m \sum_{j=1}^m |\bar y_j^1(t-\tau_1) - \bar y_j^2(t-\tau_1)|^p\right)^\frac1p =: \xi_p(t-\tau_1).
\]
This, together with the boundedness of support of $\nu^1_t$, yields
\[
\frac{d}{dt}\theta_p(t) \leq  C\theta_p(t) + C\theta_p(t-\tau_2) + C\xi_p(t-\tau_1)
\]
for some constant $C>0$ independent of $p$.  
To close the estimate, we estimate $\xi_p(t)$ using the leader dynamics. We first get
\begin{align*}
&\left|\frac{1}{m} \sum_{j=1}^m \psi(\bar y_i^1(t), \bar y_j^1(t-\tau_1))(\bar y_j^1(t-\tau_1) - \bar y_i^1(t)) - \frac{1}{m} \sum_{j=1}^m \psi(\bar y_i^2(t), \bar y_j^2(t-\tau_1))(\bar y_j^2(t-\tau_1) - \bar y_i^2(t))\right|\cr
&\quad \leq \frac{1}{m} \sum_{j=1}^m \big| \psi(\bar y_i^1(t), \bar y_j^1(t-\tau_1)) -  \psi(\bar y_i^2(t), \bar y_j^2(t-\tau_1)) \big| |\bar y_j^1(t-\tau_1) - \bar y_i^1(t)| \cr
&\qquad + \frac{1}{m} \sum_{j=1}^m \psi(\bar y_i^2(t), \bar y_j^2(t-\tau_1)) \big| (\bar y_j^1(t-\tau_1) - \bar y_i^1(t)) - (\bar y_j^2(t-\tau_1) - \bar y_i^2(t))\big|\cr
&\quad \leq \big(2C_0^y L_\psi + K\big) \left[ |\bar y_i^1(t) - \bar y_i^2(t)| + \left(\frac{1}{m} \sum_{j=1}^m| \bar y_j^1(t-\tau_1) - \bar y_j^2(t-\tau_1)|^p\right)^\frac1p \right].
\end{align*}
This yields
\[
\frac{d}{dt} \xi_p(t) \leq C\xi_p(t) + C\xi_p(t-\tau_1),
\]
where $C>0$ is a constant independent of $p$. Following arguments in \cite{CH, CPP}, from the above, we obtain
\[
\xi_p(t) \leq  C\sup_{s \in [-\tau_1, 0]}\xi_p(s).
\]
This further gives
\[
\frac{d}{dt}\theta_p(t) \leq C\theta_p(t) + C\theta_p(t-\tau_2) +C\sup_{s \in [-\tau_1, 0]}\xi_p(s),
\]
and again, we deduce 
\[
\theta_p(t) \leq C\sup_{s \in [-\tau_2, 0]}\theta_p(s)  + C\sup_{s \in [-\tau_1, 0]}\xi_p(s)
\]
for some constant $C>0$ independent of $p$. Hence, we have
\begin{align*}
&d_p(\nu_t^1,\nu_t^2) + \left( \frac1m \sum_{j=1}^m |\bar y_j^1(t) - \bar y_j^2(t)|^p\right)^\frac1p \cr
&\quad \leq C \sup_{s \in [-\tau,0]}d_p(g_s^1,g_s^2) + C \sup_{s \in [-\tau,0]}\left( \frac1m \sum_{j=1}^m |\bar y_j^{1,0}(s) - \bar y_j^{2,0}(s)|^p\right)^\frac1p.
\end{align*}
This completes the proof.
\end{proof}

We conclude this section by stating the stability result for the system \eqref{pde2}. The proof follows from a direct adaptation of the arguments developed in Lemma \ref{4.3}, using the same strategy based on Wasserstein distance estimates and characteristic flows. Since no essential new difficulties arise in this case, we omit the detailed proof.

\begin{Lemma}\label{stab}
Let $T>0$, and let $(\bar\mu_t^1,\bar\nu_t^1)$ and $(\bar\mu_t^2,\bar\nu_t^2)$ be two measure-valued solutions of \eqref{pde2} on the time interval $[0,T]$, constructed according to Theorem \ref{1.7}. Then, there exists a constant $\bar{C}>0$, depending on $T$ but independent of $p \in [1,\infty]$, such that 
\[
d_p(\bar \mu_t^1, \bar\mu_t^2) + d_p(\bar \nu_t^1, \bar\nu_t^2) \leq  \bar{C}\sup_{s \in [-\tau,0]} d_p(f_s^1, f_s^2) + \bar{C}\sup_{s \in [-\tau,0]} d_p(g_s^1, g_s^2)
\]
for all $t \in [0,T)$.
\end{Lemma}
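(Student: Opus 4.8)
The plan is to mirror the characteristic-flow argument of Lemma~\ref{4.3}, now carried out simultaneously for both measures. First I would introduce the flow maps $X^i(t;\cdot)$ and $Z^i(t;\cdot)$ generated by the velocity fields $\bar u_t^i$ and $\bar v_t^i$ from \eqref{flux2}--\eqref{flux3}, so that by Theorem~\ref{1.7} one has $\bar\mu_t^i = X^i(t;\cdot)\#\bar\mu_0^i$ and $\bar\nu_t^i = Z^i(t;\cdot)\#\bar\nu_0^i$ for $i=1,2$. Fixing optimal transport maps $\mathcal S_0^\mu$ (from $\bar\mu_0^1$ to $\bar\mu_0^2$) and $\mathcal S_0^\nu$ (from $\bar\nu_0^1$ to $\bar\nu_0^2$), I would set $\mathcal T_\mu^t := X^2(t;\cdot)\circ\mathcal S_0^\mu\circ (X^1(t;\cdot))^{-1}$ and analogously $\mathcal T_\nu^t$, extended on $[-\tau,0]$ as the optimal maps between the initial data. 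This produces the two competitor functionals
\[
\theta_p^\mu(t):=\left(\int_{\R^d}|X^1(t;x)-X^2(t;\mathcal S_0^\mu(x))|^p\,\bar\mu_0^1(dx)\right)^{\frac1p}, \qquad \theta_p^\nu(t):=\left(\int_{\R^d}|Z^1(t;z)-Z^2(t;\mathcal S_0^\nu(z))|^p\,\bar\nu_0^1(dz)\right)^{\frac1p},
\]
which dominate $d_p(\bar\mu_t^1,\bar\mu_t^2)$ and $d_p(\bar\nu_t^1,\bar\nu_t^2)$ respectively.

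The crucial structural observation is that the system is triangular: the leader velocity $\bar u_t$ depends only on $\bar\mu$, so the estimate for $\theta_p^\mu$ decouples completely and plays exactly the role that $\xi_p$ played in Lemma~\ref{4.3}. Differentiating $\theta_p^\mu(t)^p$ and estimating $\bar u_t^1(x)-\bar u_t^2(\mathcal T_\mu^t(x))$ by splitting into a term where the Lipschitz bound on $\psi$ acts on the kernel difference and a term where the boundedness of $\psi$ acts on the transport displacement — using that the supports lie in a fixed ball $B^d(0,R_1)$ via Lemma~\ref{lip2} — yields
\[
\frac{d}{dt}\theta_p^\mu(t)\le C\theta_p^\mu(t)+C\theta_p^\mu(t-\tau_1).
\]
The delayed Gr\"onwall argument of \cite{CH, CPP}, applied over successive intervals of length $\tau_1$, then closes this into $\theta_p^\mu(t)\le C\sup_{s\in[-\tau_1,0]}\theta_p^\mu(s)$ on $[0,T]$.

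Next I would treat the follower component, where $\bar v_t$ contains both a self-interaction term (through $\phi$ and $\bar\nu$) and a cross term (through $\rho$ and $\bar\mu$). Differentiating $\theta_p^\nu(t)^p$ and bounding $\bar v_t^1(z)-\bar v_t^2(\mathcal T_\nu^t(z))$, the $\phi$-integral is handled exactly as term $I$ in Lemma~\ref{4.3} and produces $\theta_p^\nu(t)+\theta_p^\nu(t-\tau_2)$, while the $\rho$-integral — after rewriting $\bar\mu^2$ as the push-forward of $\bar\mu^1$ under $\mathcal T_\mu^{t-\tau_1}$ — produces $\theta_p^\nu(t)+\theta_p^\mu(t-\tau_1)$. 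This gives
\[
\frac{d}{dt}\theta_p^\nu(t)\le C\theta_p^\nu(t)+C\theta_p^\nu(t-\tau_2)+C\theta_p^\mu(t-\tau_1).
\]
Feeding in the already-closed bound for $\theta_p^\mu$ and applying the same delayed Gr\"onwall lemma to $\theta_p^\nu$ yields $\theta_p^\nu(t)\le C\sup_{s\in[-\tau,0]}(\theta_p^\nu(s)+\theta_p^\mu(s))$. Summing the two estimates and identifying $\theta_p^\mu(s)=d_p(f_s^1,f_s^2)$ and $\theta_p^\nu(s)=d_p(g_s^1,g_s^2)$ on $[-\tau,0]$ gives the claim for $p\in[1,\infty)$; the case $p=\infty$ follows by passing to the limit, the constant $\bar C$ being independent of $p$. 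I expect the main obstacle to be purely bookkeeping: keeping the two distinct transport maps $\mathcal T_\mu$ and $\mathcal T_\nu$ straight inside the cross term, and invoking the uniform-in-time support bounds of Lemma~\ref{lip2} so that every constant $C$ is independent of $t\in[0,T]$ and of $p$.
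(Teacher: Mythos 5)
Your proposal is correct and is exactly the route the paper intends: the paper omits the proof of Lemma~\ref{stab}, stating only that it is a direct adaptation of Lemma~\ref{4.3} via characteristic flows and Wasserstein estimates, and your argument carries out precisely that adaptation. In particular, your observation that the leader equation decouples so that $\theta_p^\mu$ plays the role of $\xi_p$, followed by the delayed Gr\"onwall closure from \cite{CH, CPP} and the limit $p\to\infty$, matches the intended proof faithfully.
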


%
%
%
%
%
%
\subsection{Mean-field limit and emergence of consensus}

In this part, we provide the details on the proof of the consensus estimate in Theorem \ref{1.7} establishing the consensus behavior of measure-valued solutions to the mean-field systems \eqref{pde1} and \eqref{pde2}, based on a rigorous passage from the particle model \eqref{eqA}. The key ingredient in the argument is the stability results obtained earlier, which allow us to control the distance between the empirical measure solutions of the particle system and the limiting measure-valued solutions.

 We divide the argument into two cases corresponding to the systems \eqref{pde1} and \eqref{pde2}.
 
 \medskip

\noindent {\it Case (i).}  Let $(\bar y_i^0(s), g_s) \in C([-\tau,0];\R^d) \times C([-\tau,0];\mathcal{P}_\infty(\R^d))$ be given initial data. For each $N \in \mathbb{N}$, we construct a particle approximation of $g_s$ by defining
\[
g_s^{N}:=\frac{1}{N} \sum_{i=1}^N \delta_{x_i^{N,0}(s)}, \quad s \in [-\tau,0],
\]
where $x_i^{N,0} \in C([-\tau,0]; \mathbb{R}^d)$ are chosen such that 
\[
\max_{s \in [-\tau,0]} d_\infty(g_s,g_s^{N}) \rightarrow 0 \quad \mbox{as } N \to \infty.
\]
Likewise, we choose $y_i^{N,0} \in C([-\tau,0]; \mathbb{R}^d)$ satisfying
\[
 \max_{s \in [-\tau,0]} \max_{i=1,\dots, m}|y_i^{N,0}(s) - \bar y_i^0(s)| \to 0  \quad \mbox{as } N \to \infty.
 \]

\begin{Remark}
In principle, one could simply set $y_i^{N,0} = \bar y_i^0$ for all $i$ and $N$. 
However, we keep the above more general approximation procedure, since in the treatment of Case (ii) below we do not rely on such an identification and instead work with general approximating sequences. 
Adopting the same framework here makes the two cases fully parallel and simplifies the presentation.
\end{Remark}

Let $\{y_i^N(t)\}_{i=1}^m$ and $\{x_j^N(t)\}_{j=1}^N$ denote the solution to the particle system \eqref{eqA} corresponding to these initial data. Then, by Theorem \ref{decay_Th} and the definition of the diameter $d(t)$ in Definition \ref{def'}, we have
\begin{equation}\label{lt_dis}
d(t) \leq e^{-\gamma(t-2\tau)}D_0,
\end{equation}
for all $t \in [0,T).$ 

We now define the empirical measure
\[
\nu_t^{N}:= \frac{1}{N} \sum_{i=1}^N \delta_{x_i^N(t)},
\]
which is the measure-valued solution to the system \eqref{pde1} in the sense of Definition \ref{solution}. By Lemma \ref{4.3}, there exists a constant $C>0$, independent of $N$, such that 
\begin{align*}
&d_\infty(\nu_t^N,\nu_t) + \max_{i=1,\dots, m} |y_i^N(t) - \bar y_i(t)|   \leq C \sup_{s \in [-\tau,0]}d_\infty(g_s^N,g_s) + C \sup_{s \in [-\tau,0]}\max_{i=1,\dots, m}|y_i^{N,0}(s) - \bar y_i^{0}(s)|.
\end{align*}
This implies that $d(t) \to d^\nu(t)$ and $D_0 \to D^\nu_0$ as $N \to \infty$, and thus, passing to the limit in \eqref{lt_dis}, we obtain
\[
d^\nu(t) \leq  e^{-\gamma(t-2\tau)}D_0^\nu.
\]

\noindent {\it Case (ii).}  The argument is analogous. For given initial data $(\bar f_s, \bar g_s) \in C([-\tau,0];\mathcal{P}_\infty(\R^d)) \times C([-\tau,0];\mathcal{P}_\infty(\R^d))$, we consider approximations
\[
\bar f_s^m:= \frac1m\sum_{i=1}^m \delta_{\bar y_i^{m,0}(s)} \quad \mbox{and} \quad g^m \in C([-\tau,0];\mathcal{P}_\infty(\R^d))
\]
with $\bar y_i^{m,0}(s) \in C([-\tau,0]; \mathbb{R}^d)$ satisfying
\[
\max_{s \in [-\tau,0]}d_\infty(\bar f_s^m, \bar f_s) + \max_{s \in [-\tau,0]} d_\infty (g_s^m, \bar g_s) \to 0 \quad \mbox{as } m \to \infty.
\]
Let $\{y_i^m\}_{i=1}^m$ and $\nu^m$ denote the solutions to the system \eqref{pde1} corresponding to this initial data. Then, applying the result of Case (i), we obtain
\[
d^{\nu^m}(t) \leq  e^{-\gamma(t-2\tau)}D_0^{\nu^m}.
\]
Next, define the empirical measure
\[
\bar \mu_t^m := \frac{1}{m} \sum_{i=1}^m \delta_{\bar y_i^m(t)},
\]
so that the pair $(\bar \mu_t^m, \nu_t^m)$ solves the system \eqref{pde2}. Then, applying the stability estimate in Lemma \ref{stab}, we get
\[ 
d_\infty(\bar \mu_t^m, \bar\mu_t) + d_\infty(\nu_t^m, \bar\nu_t) \leq  C\sup_{s \in [-\tau,0]} d_\infty(\bar f_s^m, \bar f_s) + C\sup_{s \in [-\tau,0]} d_\infty(g_s^m, g_s).
\] 
As before, taking the limit as $m \to \infty$, we conclude
\[
d^{\bar \mu, \bar \nu}(t) \leq  e^{-\gamma(t-2\tau)}D_0^{\bar \mu, \bar \nu}.
\]
This completes the proof.

%
%
%
%
%
%
%
%
\section*{Acknowledgments}
The work of Y.-P. Choi is supported by NRF grant no. RS-2024-00406821.
C. Cicolani and C. Pignotti are members of Gruppo Nazionale per l'Analisi Matematica, la
Probabilità e le loro Applicazioni (GNAMPA) of the Istituto Nazionale di Alta Matematica (INdAM).
They are partially supported by INdAM -
GNAMPA Projects (CUP E5324001950001). 
C. Pignotti is also partially supported by  PRIN-PNRR 2022 (P20225SP98) {\it Some mathematical approaches to climate
change and its impacts}, and 
 PRIN 2022 (2022238YY5) {\it Optimal control
problems: analysis, approximation, and applications}.

%
%
%
%
%
%

\end{document}